\newtheorem{theorem}{Theorem}[section]
\newtheorem{proposition}[theorem]{Proposition}
\newtheorem{lemma}[theorem]{Lemma}
\newtheorem{remark}[theorem]{Remark}
\newtheorem{corollary}[theorem]{Corollary}
\newtheorem{conjecture}[theorem]{Conjecture}
   \newcommand{\ba}{\begin{eqnarray}}
   \newcommand{\na}{\end{eqnarray}}
   \newcommand{\ban}{\begin{eqnarray*}}
   \newcommand{\nan}{\end{eqnarray*}}
\newcommand{\bC}{{\mathbb C}}
\newcommand{\bP}{{\mathbb P}}
\newcommand{\bQ}{{\mathbb Q}}
\newcommand{\bZ}{{\mathbb Z}}
\newcommand{\sC}{{\mathscr C}}
\newcommand{\sX}{{\mathscr X}}
\newcommand{\tspec}{\textrm{Spec}}
  \newcommand{\<}{\langle}
  \renewcommand{\>}{\rangle}
\newcommand{\suml}{\sum\limits}
\newcommand{\prodl}{\prod\limits}
\begin{document}

\title{On semisimplicity of quantum cohomology of $\bP^1$-orbifolds}

\author[Hua-Zhong Ke]{Hua-Zhong Ke}
\address{School of Mathematics, Sun Yat-sen University, Guangzhou 510275, P.R. China;}
\email{kehuazh@mail.sysu.edu.cn}

\maketitle

\begin{abstract}
For a $\bP^1$-orbifold $\sC$, we prove that its big quantum cohomology is generically semisimple. As a corollary, we verify a conjecture of Dubrovin for orbi-curves. We also show that the small quantum cohomology of $\sC$ is generically semisimple iff $\sC$ is Fano, i.e. it has positive orbifold Euler characteristic.
\end{abstract}

{\bf Keywords:} quantum cohomology, orbi-curve, Dubrovin's conjecture.

{\bf MSC(2010):} 14N35, 53D45.

%\date{\today}

%\tableofcontents

\section{Introduction}

Quantum cohomology stems from genus-zero Gromov-Witten theory, which concerns virtual counts of rational curves in target manifolds or orbifolds. One can naively view the quantum cohomology ring as a deformation of the ordinary cohomology ring. A fundamental problem in Gromov-Witten theory is to understand the topology and geometry of target spaces hidden behind the algebraic structure of their quantum cohomology. 

Unlike the ordinary cohomology, the quantum cohomology can be semisimple for some targets, and deep structural results for semisimple Gromov-Witten theories are known (e.g. Givental-Teleman's reconstruction theorem \cite{G,T}). It is important to understand the geometry of such semisimple target spaces. One of the most important conjectures in this direction was proposed by Dubrovin \cite{D} in his ICM talk in 1998 (later made more precise in \cite{B04,HMT}):
\begin{conjecture}\label{conj}
For a smooth projective variety $X$, the followings are equivalent:
\begin{enumerate}
\item The (even parity) big quantum cohomology $QH(X)$ is generically semisimple.
\item The bounded derived category of coherent sheaves $D^b(X)$ admits a full exceptional collection.
\end{enumerate}
\end{conjecture}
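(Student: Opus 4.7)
The statement is Dubrovin's conjecture in full generality, a well-known open problem, so a realistic plan can only take the form of a strategy with an honest identification of where it breaks down. My approach would be to use the quantum $D$-module of $X$ (equivalently, Dubrovin's second structure connection) as a bridge. Generic semisimplicity of $QH(X)$ is equivalent to the pairwise distinctness, after a generic perturbation of the parameter, of the eigenvalues of quantum multiplication by a generic class, which in turn is equivalent to the quantum connection having $n=\dim H^{\mathrm{even}}(X,\bC)$ distinct irregular singular points at $z=\infty$. A full exceptional collection, on the other hand, is expected via the Gamma/Dubrovin refinement to produce asymptotic flat sections $s_i\propto \widehat{\Gamma}_X\cdot\mathrm{ch}(E_i)\cdot e^{-u_i/z}$ whose Stokes matrix coincides with the Gram matrix of the Euler pairing on $\langle E_1,\ldots,E_n\rangle$. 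The target is to show that both (1) and (2) are equivalent to the same Stokes-theoretic data.

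For the direction $(2)\Rightarrow(1)$, I would take the exceptional collection, compute its Euler-Gram matrix (upper-unitriangular after reordering), identify it with the Stokes matrix of the quantum connection, and use rigidity of the Stokes structure under parallel transport in the K\"ahler parameter to deduce that the irregular singular points $u_1,\ldots,u_n$ can be made pairwise distinct after a generic deformation, yielding generic semisimplicity. For $(1)\Rightarrow(2)$, semisimplicity together with the Dubrovin-Malgrange theorem yields, at each admissible phase, a canonical asymptotic basis of flat sections with its monodromy/Stokes data; the task is then to realize each basis element as the central charge of an honest object of $D^b(X)$, and to verify that these objects assemble into a full exceptional collection.

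The hardest step is this last lift. There is no general procedure for extracting coherent complexes from analytic Stokes data, and every verified case of the conjecture (toric manifolds, Grassmannians, certain homogeneous spaces, low-dimensional Fano complete intersections) relies either on an ad hoc construction of the exceptional collection on the B-side or on homological mirror symmetry with a Landau-Ginzburg mirror whose Lefschetz thimbles realize the objects. Consequently, rather than attempt the biconditional in general, for $X=\sC$ a $\bP^1$-orbifold the plan is to couple the explicit tilting bundles on weighted projective lines of Geigle-Lenzing (which supply (2) for free) with a direct Gromov-Witten calculation establishing (1), reducing the conjecture in this setting to generic semisimplicity of $QH(\sC)$, which is exactly the main theorem of the paper.
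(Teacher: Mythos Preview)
The statement is Conjecture~\ref{conj}, Dubrovin's conjecture for an arbitrary smooth projective variety. The paper does not prove it; it is stated as an open problem, and only the orbi-curve case (Corollary~\ref{cor}) is established. So there is no ``paper's own proof'' of this statement to compare against, and you are right to say so in your first sentence.

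Your bridge via Stokes data of the quantum connection is the standard heuristic underlying the conjecture, but it is not a proof strategy that can currently be completed: identifying the Stokes matrix with the Euler--Gram matrix of an exceptional collection is itself part of the refined Dubrovin/Gamma package, and your step of lifting asymptotic flat sections to actual objects of $D^b(X)$ is exactly the missing technology. You acknowledge this honestly, so your proposal is an outline with a correctly located gap, not a claimed proof.

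Your final paragraph specializes to $\sC$ a $\bP^1$-orbifold and proposes the same decoupling the paper uses for Corollary~\ref{cor}: take (2) from Geigle--Lenzing and establish (1) by a direct Gromov--Witten argument. Two differences from what the paper actually does. First, the paper proves (1) not via eigenvalues of quantum multiplication or Stokes analysis but via Abrams' criterion: it shows $\det(e_q\star_{\mathbf t})\neq 0$ for the quantum Euler class $e_q$, computing the coefficient at $Qe^{t^{01}}$ explicitly from Rossi's decomposition of the degree-zero and degree-one parts of the potential. Second, Corollary~\ref{cor} is a biconditional over \emph{all} orbi-curves, so one must also show that when $|\sC|$ has positive genus both sides fail; the paper does this by noting that the quantum product then degenerates to the Chen--Ruan product (hence has nilpotents) and that nonvanishing odd cohomology obstructs a full exceptional collection via the orbifold HKR isomorphism. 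Your proposal does not address this half.
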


In the last two decades, only a few examples of smooth projective varieties with semisimple quantum cohomology are known, since it is difficult to check the semisimplicity. Such examples include projective toric manifolds \cite{I}, certain rational homogeneous spaces and hyperserfaces inside them \cite{CMP,CP,P}, rational surfaces \cite{B04,BM}, certian Fano threefolds \cite{C}, blow-up of $\bP^3$ along a smooth rational curve \cite{Ke}, and blow-ups of such varieties at points \cite{B04}. To the knowledge of the author, all known semisimple examples admit full exceptional collections.

It is natural to generalize Dubrovin's conjecture to orbifolds. In this article, we will prove this conjecture for orbi-curves. An orbi-curve $\sC$ is a complex orbifold with trivial generic stablizer, whose underlying space $|\sC|$ is a compact Riemann surface (in the literature, an orbi-curve is also called an orbifold Riemann surface). If $|\sC|\cong\bP^1$, then we say that $\sC$ is a $\bP^1$-orbifold.

It was shown by Geigle-Lenzing in the 80's that $\bP^1$-orbifolds admit full exceptional collections \cite{GL}. This inspires us to prove the following proposition.

\begin{proposition}\label{big}
Let $\sC$ be a $\bP^1$-orbifold. Then $QH(\sC)$ is generically semisimple.
\end{proposition}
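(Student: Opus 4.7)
The plan is to exhibit a single point $\tau$ in the big phase space of $H^*_{\mathrm{CR}}(\sC)$ at which the quantum product $\star_\tau$ is semisimple; since the non-semisimple locus is cut out by the vanishing of the discriminant of the characteristic polynomial of a fixed multiplication operator, and hence is an analytic hypersurface, generic semisimplicity of $QH(\sC)$ will follow.

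Write $\sC = \bP^1_{a_1,\ldots,a_n}$ with orbifold points $p_1,\ldots,p_n$ of orders $a_1,\ldots,a_n$. Recall that $H^*_{\mathrm{CR}}(\sC)$ has the basis $\{1, [\mathrm{pt}]\} \cup \{e_{i,k} : 1 \le i \le n,\ 1 \le k \le a_i-1\}$, where $e_{i,k}$ denotes the identity class of the twisted sector at $p_i$ with monodromy $k/a_i$. Writing a general point of the big phase space as
\[
\tau = t_0 \cdot 1 + s \cdot [\mathrm{pt}] + \sum_{i,k} t_{i,k}\, e_{i,k},
\]
the key idea is to switch on only the first twisted-sector parameters: I would consider the one-parameter family $\tau(\epsilon) = \epsilon\sum_{i=1}^n e_{i,1}$ and study the matrix $L(\tau(\epsilon)) := [\mathrm{pt}]\star_{\tau(\epsilon)}(-)$ acting on $H^*_{\mathrm{CR}}(\sC)$. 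The structure constants of $\star_{\tau(\epsilon)}$ are generating series in $\epsilon$ and the Novikov variable of genus-zero orbifold Gromov--Witten invariants with one insertion of $[\mathrm{pt}]$ and the remaining insertions among the $e_{i,1}$'s; for $\bP^1$-orbifolds these invariants are accessible, either via a WDVV bootstrap from the elementary $3$-point invariants (the classical orbifold cup products, plus the one degree-one quantum term coming from the hyperbolic structure of $\sC$) or via the explicit mirror Landau--Ginzburg model.

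The heart of the proof is then to verify that the characteristic polynomial of $L(\tau(\epsilon))$ has simple roots for small nonzero $\epsilon$. At $\epsilon = 0$ this reduces to the small quantum product, which in general is \emph{not} semisimple (as the later part of the paper will show in the non-Fano case), so one must check that the twisted perturbation genuinely splits every multiple eigenvalue. This amounts to showing that a certain discriminant --- a power series in $\epsilon$ and the Novikov variable --- has a nonzero leading coefficient in $\epsilon$; the main technical obstacle is to identify this leading coefficient explicitly and verify it does not vanish. I expect this to follow either from the Morse property of the perturbed mirror superpotential for $\bP^1$-orbifolds, or from a direct count of the first nonvanishing orbifold invariants of the form $\langle [\mathrm{pt}], e_{i_1,1},\ldots, e_{i_r,1}\rangle$ contributing to the off-diagonal entries that resolve each Jordan block. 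With this discriminant nonvanishing, semisimplicity at $\tau(\epsilon)$ holds for small generic $\epsilon$, and Proposition~\ref{big} follows.
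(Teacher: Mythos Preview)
Your outline correctly identifies a sufficient strategy---exhibit one point where the quantum product is semisimple---and your choice of direction $\tau(\epsilon)=\epsilon\sum_i e_{i,1}$ is well-motivated. But there is a genuine gap at precisely the step you yourself flag as ``the main technical obstacle'': you never verify that the discriminant of $[\mathrm{pt}]\star_{\tau(\epsilon)}$ is not identically zero, only that you ``expect'' it. Both mechanisms you propose are problematic. For $\bP^1$-orbifolds of general type there is no mirror Landau--Ginzburg model in the literature with an established Morse property (the known mirrors cover only the Fano and Calabi--Yau cases), so that route is not uniformly available; the paper's introduction explicitly remarks that existing methods do not handle general $\bP^1_{\mathbf a}$. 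Your ``direct count'' alternative is exactly where all the work lies, and you give no indication of how to organize it---a WDVV bootstrap from $3$-point data does not obviously close for arbitrary $\mathbf a$. As written, the proposal is a plan, not a proof.

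The paper's argument is organized differently. Rather than the spectrum of $[\mathrm{pt}]\star$, it uses Abrams' criterion that semisimplicity is equivalent to $\det(e_q\star_{\mathbf t})\neq 0$, where $e_q=\sum_s\phi_s\star_{\mathbf t}\phi^s$ is the quantum Euler class. The key structural input is Rossi's SFT computation of the potential, which yields the decomposition $A^{\mathbf a}=\sum_\alpha A^{a_\alpha}$ and $B^{\mathbf a}_1=\prod_\alpha B^{a_\alpha}_1$ for the degree-$0$ and degree-$1$ parts; this forces the matrix of $e_q\star_{\mathbf t}$ modulo $(Qe^{t^{01}})^2$ into block form indexed by the orbifold points, reducing everything to a tear-drop calculation. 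A single WDVV identity and two explicit degree-zero invariants then pin down
\[
\det(e_q\star_{\mathbf t}) \;=\; Qe^{t^{01}}\Bigl[-4\prod_{\alpha=1}^r\frac{(a_\alpha-1)^{a_\alpha-1}}{a_\alpha}(t^{\alpha 1})^{a_\alpha+1}+O(t^{1,>1},\ldots,t^{r,>1})\Bigr]+o(Qe^{t^{01}}),
\]
which is visibly nonzero. This leading term lives exactly in your direction $t^{\alpha 1}$, so your intuition about where to perturb was correct; what your proposal is missing is Rossi's decomposition (which makes the count tractable) and the explicit invariants that produce the nonzero coefficient.
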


As a corollary, we verify Dubrovin's conjecture for orbi-curves.

\begin{corollary}\label{cor}
Let $\sC$ be an orbi-curve. Then $QH(\sC)$ is generically semisimple iff $D^b(\sC)$ admits a full exceptional collection.
\end{corollary}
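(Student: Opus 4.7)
The plan is to dispatch the corollary by a case analysis on the genus $g$ of the coarse Riemann surface $|\sC|$, showing that the two conditions are simultaneously true or simultaneously false in each case.

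If $g = 0$, then $|\sC| \cong \bP^1$, so $\sC$ is a $\bP^1$-orbifold. Proposition \ref{big} gives generic semisimplicity of $QH(\sC)$, and the Geigle-Lenzing theorem \cite{GL} recalled in the introduction supplies a full exceptional collection on $D^b(\sC)$, so both conditions hold and the equivalence is immediate.

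If $g \geq 1$, the plan is to show that both conditions fail, which makes the equivalence vacuously true. For the quantum side, every holomorphic map from a genus-$0$ orbi-curve to $\sC$ is constant because $|\sC|$ admits no non-constant morphism from $\bP^1$; hence every positive-degree genus-$0$ Gromov-Witten invariant of $\sC$ vanishes, and the remaining degree-$0$ invariants with $n \geq 4$ insertions are killed by the standard dimensional vanishing $\int_{\overline{M}_{0,n}} 1 = 0$, applied componentwise over the twisted sectors. It follows that for every deformation parameter $t$ the big quantum product coincides with the classical Chen-Ruan product, and the latter already contains the class $[p]$ of a smooth point of $\sC$ as a nilpotent element (since $[p] \cup [p] = 0$ while $[p] \neq 0$), so $QH(\sC)$ is nowhere semisimple.

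For the derived side, I use the standard fact that a full exceptional collection on $D^b(\sC)$ forces the Hochschild homology $HH_\ast(\sC)$ to be concentrated in degree zero. By the Hochschild-Kostant-Rosenberg decomposition for smooth Deligne-Mumford stacks, $HH_1(\sC)$ contains the summand $H^1(|\sC|, \cO_{|\sC|}) \cong \bC^g$, which is nonzero when $g \geq 1$, contradicting the existence of such a collection.

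The step I expect to be most delicate is the degree-$0$ vanishing of orbifold Gromov-Witten invariants with $n \geq 4$ insertions on the twisted-sector components of $\overline{M}_{0,n}(\sC, 0)$: one must verify that integration against the virtual class still reduces to a top-degree integral on a positive-dimensional twisted moduli $\overline{M}_{0,\vec{g}}$, so that it vanishes as in the untwisted case. Once this reduction is in hand, both halves of the $g \geq 1$ case follow from standard facts.
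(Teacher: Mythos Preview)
Your overall strategy matches the paper's: split on the genus of $|\sC|$, invoke Proposition~\ref{big} and Geigle--Lenzing for $g=0$, and for $g\geq1$ show both sides fail, using an orbifold HKR/Hochschild argument on the derived side. The derived-category half is fine and is exactly what the paper does.

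The gap is on the quantum side for $g\geq1$, precisely at the step you flagged as delicate. The claim that degree-$0$ invariants with $n\geq4$ insertions vanish on the twisted-sector components is \emph{false}. When the constant map lands at an orbifold point of order $a_\alpha$, the relevant component of $\overline{M}_{0,n}(\sC,0)$ carries a nontrivial obstruction bundle coming from the $\mu_{a_\alpha}$-representation on $T_p\sC$, and the virtual class is the Euler class of that bundle, not the fundamental class. These integrals are generically nonzero: in the notation of the paper they are exactly the coefficients of $A^{a_\alpha}$, and for instance Lemma~\ref{A1jl} gives $A^a_{1,a-1,a-1}=-t^1/a^2+O(t^{>1})\neq0$, which is a four-point degree-$0$ invariant. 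So the big quantum product is \emph{not} literally equal to the Chen--Ruan product once orbifold points are present.

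The fix is short and is implicit in the paper's one-line argument. Even though $A^{\mathbf a}$ deforms the product among twisted classes, none of its terms involve $t^{01}$; by the divisor axiom every power of $e^{t^{01}}$ is accompanied by $Q^d$ with $d\geq1$, and those all vanish when $g\geq1$. Hence the potential is at most linear in $t^{01}$, so $F_{(01),(01),s}\equiv0$ for every $s$, and $\phi_{01}\star_{\mathbf t}\phi_{01}=0$ for all $\mathbf t$. Thus $\phi_{01}$ is a nonzero nilpotent in $QH(\sC)$ and semisimplicity fails. Replace your dimensional-vanishing paragraph with this observation and the proof goes through.
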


It was observed that, to formulate Dubrovin's conjecture for smooth projective varieties, we need $QH(X)$ instead of $qH(X)$, since there are smooth projective varieties $X$ such that $QH(X)$ is generically semisimple while $qH(X)$ is not. Here $qH(X)$ is the (even parity) small quantum cohomology of $X$. The first known example of this kind is $IG(2,6)$ \cite{GMS}, and up until now, only $IG(2,2n)(n\geq3)$ and $F_4/P_4$ are proved to have this pattern (see Theorem 4 in \cite{P}). Note that all these examples have dimensions at least seven. In the category of orbifolds, our second main result shows that such phenomena appear in dimension one, and non-Fano $\bP^1$-orbifolds give a new class of examples.

\begin{proposition}\label{small}
Let $\sC$ be an orbi-curve. Then $qH(\sC)$ is generically semisimple iff $\sC$ is Fano, i.e. it has positive orbifold Euler characteristic.
\end{proposition}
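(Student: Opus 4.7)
The plan is to split based on the genus $g = g(|\sC|)$. Since $\chi_{orb}(\sC) = 2 - 2g - \sum_i(1-1/n_i)$, every Fano orbi-curve automatically has $g=0$ and is thus a $\bP^1$-orbifold.

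First, suppose $g \geq 1$, so $\sC$ is non-Fano. I would argue that every positive-degree genus-zero orbifold stable map to $\sC$ induces, on at least one irreducible component of its coarse moduli, a non-constant morphism $\bP^1 \to |\sC|$, which cannot exist when $g \geq 1$. Hence all positive-degree genus-zero orbifold Gromov--Witten invariants of $\sC$ vanish and the small quantum product on $H^*_{CR}(\sC)$ collapses to the classical Chen--Ruan orbifold cup product. Since $\dim_\bC \sC = 1$, the point class $[pt] \in H^2_{CR}(\sC)$ satisfies $[pt]^2 = 0$, so $qH(\sC)$ has a nonzero nilradical and fails to be semisimple.

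Now suppose $g=0$, so $\sC$ is a $\bP^1$-orbifold, and Fano-ness is equivalent to $\chi_{orb}(\sC) > 0$. For the Fano subcase (the footballs $\bP^1_{a,b}$ and the spherical triangle orbifolds $\bP^1_{a,b,c}$ with $1/a+1/b+1/c>1$), I would use the explicit description of small quantum multiplication developed in the proof of Proposition \ref{big} to check that the characteristic polynomial of quantum multiplication by a divisor class has non-identically-vanishing discriminant in $q$, giving generic semisimplicity. For the non-Fano subcase ($\chi_{orb}(\sC) \leq 0$, i.e.\ elliptic or hyperbolic $\bP^1$-orbifolds), the expectation is that this discriminant vanishes identically in $q$, so that the small-parameter locus lies entirely inside the caustic of the Frobenius manifold.

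The delicate point is the non-Fano $\bP^1$-orbifold case, where small quantum corrections are genuinely nonzero and a simple vanishing argument is unavailable. The main obstacle is to exhibit, for every value of $q$, a nonzero nilpotent element in $qH(\sC)$; I anticipate this following from a careful bookkeeping of the Chen--Ruan grading together with the Euler field, using the non-positivity of $\chi_{orb}(\sC)$ to force a persistent collision of eigenvalues of quantum multiplication by a divisor (equivalently, a non-Morse critical point of the mirror Landau--Ginzburg potential over the entire small-parameter locus).
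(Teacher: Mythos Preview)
Your overall split is reasonable, but the proposal has a genuine gap precisely where you flag the ``delicate point'': the non-Fano $\bP^1$-orbifold case. You describe this as something you ``anticipate'' will follow from grading bookkeeping and eigenvalue collisions, but you never carry it out. In fact the argument is short and does not require eigenvalue analysis or mirror symmetry at all, and moreover it subsumes your separate $g\geq 1$ argument. The key is the dimension axiom: if $\<x,y,\phi_s\>_{0,3,d}^{\sC}\neq 0$ with $d>0$, then
\[
\deg_{orb}x+\deg_{orb}y+\deg_{orb}\phi_s=2+2d\cdot\chi_{orb}(\sC)\leq 2,
\]
so $\deg_{orb}\phi^s\geq\deg_{orb}x+\deg_{orb}y$. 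Thus when $\chi_{orb}(\sC)\leq 0$, every term of $x\circ y$ (classical and quantum) has orbifold degree at least $\deg_{orb}x+\deg_{orb}y$, and any element of positive orbifold degree is nilpotent. This is the whole ``only if'' direction, uniformly for all non-Fano orbi-curves; your separate treatment of $g\geq 1$ via nonexistence of rational curves is correct but unnecessary.

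For the ``if'' direction, your plan diverges from the paper's in two ways. First, the explicit small quantum multiplication is not developed in the proof of Proposition~\ref{big} (which concerns the big quantum cohomology and the quantum Euler class); the paper instead quotes explicit presentations of $\bar qH(\sC)=H^{even}_{orb}(\sC)\otimes\bC[Q]$ from the literature for each Fano type. Second, rather than computing the discriminant of the characteristic polynomial of multiplication by a divisor class, the paper specializes the presentation ideal to $Q=1$ and verifies directly, case by case ($\bP^1_{a_1,a_2}$, $\bP^1_{2,2,a}$, $\bP^1_{2,3,3}$, $\bP^1_{2,3,4}$, $\bP^1_{2,3,5}$), that it cuts out exactly $N=\dim H^{even}_{orb}(\sC)$ distinct points in affine space, which forces semisimplicity at $Q=1$ and hence generically. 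Your discriminant approach could also work in principle (distinct eigenvalues for one operator in a commutative algebra of the same dimension forces semisimplicity), but you would still need those explicit presentations as input, and you should be aware that a single divisor class need not generate the algebra, so you would have to check that its multiplication operator really has $N$ distinct eigenvalues rather than merely a squarefree minimal polynomial.
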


Recall that the orbifold Euler characteristic of an orbi-curve $\sC$ is
\ban
\chi_{orb}(\sC)=\chi_{top}(|\sC|)-\suml_{p\in|\sC|}(1-\frac{1}{a_p}),
\nan
where $a_p$ is the order of $p$. Note that $a_p$ is larger than $1$ for only finitely many $p$. We say that $\sC$ is Fano if $\chi_{orb}(\sC)>0$, $\sC$ is Calabi-Yau if $\chi_{orb}(\sC)=0$, and $\sC$ is of general type if $\chi_{orb}(\sC)<0$. One can check that $\sC$ is Fano iff it is one of the followings:
\ban
\bP^1_{a_1,a_2}(a_1,a_2\geq1),\bP^1_{2,2,a}(a\geq2),\bP^1_{2,3,a}(a=3,4,5),
\nan
and $\sC$ is Calabi-Yau iff it is one the followings:
\ban
\textrm{elliptic curves}, \bP^1_{2,2,2,2}, \bP^1_{3,3,3}, \bP^1_{2,4,4}, \bP^1_{2,3,6}.
\nan
Here for a Riemann surface $C$ and a tuple of positive integers $\mathbf a=(a_1,\cdots,a_r)$, we use $C_\mathbf a$ to denote an orbi-curve with underlying space $C$ and $r$ distinct (possibly trivial) orbifold points with local groups $\mu_{a_1},\cdots,\mu_{a_r}$. It is well-known that every orbi-curve has this form.

To prove Proposition \ref{big}, our strategy is to show the invertibility of $e_q$, the quantum Euler class introduced by Abrams in \cite{A}, since the semisimplicity of the big quantum cohomology is equivalent to the invertibility of $e_q$ (Theorem 3.4 in \cite{A}). For a $\bP^1$-orbifold, we will show that the degree-zero part of $\det(e_q\star)$ vanishes, but the degree-one part is non-vanishing. Besides the dimension axiom and WDVV, a key ingredient in the computation of $\det(e_q\star)$ is a decomposition result of the degree-zero and degree-one parts of the genus-zero (primary) potential of orbi-curves (Proposition \ref{decompositionAB}). This decomposition comes from the computation of the genus-zero potential by Rossi \cite{R}, who used Symplectic Field Theory (SFT) technique \cite{EGH} to express the potential in terms of connected Hurwitz numbers and SFT invariants of orbifold caps. For Proposition \ref{small}, we use the dimension axiom to prove the ``only if" part, and to prove the ``if" part, we use the explicit presentation of the small quantum cohomology of Fano orbi-curves, which was obtained or implicitly known in \cite{MT, R, IST12}.

We remark that the semisimplicity of $QH(\sC)$ was known only for $\sC=\bP^1_{a_1,a_2,a_3}$ and $\bP^1_{2,2,2,2}$, and the semisimplicity of $qH(\sC)$ was known only for $\sC=\bP^1_{a_1,a_2}$ \cite{MT, R, KS, ST}. The existing methods in the literature do not seem to work for general cases.

In this article, we only consider orbi-curves which are effective in the sense that the generic stablizer is trivial. For an ineffective orbi-curve $\sC$, we conjecture that its big quantum cohomology is  generically semisimple iff its underlying space is $\bP^1$, and its small quantum cohomology is generically semisimple iff its rigidification \cite{ACV,BN} is an effective Fano orbi-curve. We hope to study this in the future.

The rest of the article is organized as follows. In Section 2, we briefly review some basic materials on quantum cohomology of orbi-curves, and give several different but equivalent characterization of semisimplicity. In Section 3, we prove Proposition \ref{big} and Corollary \ref{cor}. In Section 4, we prove Proposition \ref{small}.

\section{Preliminaries}

In this section, we briefly review some basic materials on quantum cohomology of orbi-curves and fix notations used throughout the rest of the article. We also give several different but equivalent characterization of semisimplicity of quantum cohomology. 

\subsection{Quantum cohomology of orbi-curves}

In this subsection, we assume the readers have some familiarity with orbifold quantum cohomology, and we refer interested readers to \cite{AGV,CR02,CR04} for details.

We have the following decomposition of the inertia orbifold of $C_\mathbf a$ into disjoint union of connected components:
\ban
IC_\mathbf a=C_\mathbf a\sqcup\bigsqcup_{\alpha=1}^r\bigsqcup_{i=1}^{a_\alpha-1}B\mu_{a_\alpha}(i).
\nan
Here $B\mu_{a_\alpha}(i)\cong B\mu_{a_\alpha}$, which is the classifying stack of the group of $a_\alpha$-th roots of units. Then the (even parity) orbifold cohomology group of $C_\mathbf a$ is
\ban
H^{even}_{orb}(C_\mathbf a)=H^{even}(IC_\mathbf a)=H^0(C_\mathbf a)\oplus H^2(C_\mathbf a)\oplus\bigoplus_{\alpha=1}^r\bigoplus_{i=1}^{a_\alpha-1}H^0(B\mu_{a_\alpha}(i)).
\nan
Here ``even" means we only consider classes of even topological degree.

Fix an index set
\ban
\mathscr S=\{(0,0),(0,1)\}\sqcup\mathscr T\textrm{ with }\mathscr T=\bigsqcup_{\alpha=1}^r\{(\alpha,1),\cdots,(\alpha,a_\alpha-1)\},
\nan
and set
\ban
\phi_{00}&:=&\mathbbm1\in H^0(C_\mathbf a),\\
\phi_{01}&:=&[point]\in H^2(C_\mathbf a),\\
\phi_{\alpha i}&:=&\mathbbm1\in H^0(B\mu_{a_\alpha}(i)).
\nan
Then $\mathscr B=\{\phi_s\}_{s\in\mathscr S}$ is a basis of $H^{even}_{orb}(C_\mathbf a)$, which is homogeneous with respect to the orbifold degree. Here the orbifold degrees of $\phi_{00}$ and $\phi_{01}$ are their topological degrees, and the orbifold degree of $\phi_{\alpha,i}$ is $\frac{2i}{a_\alpha}$.

In terms of classes in $\mathscr B$, the orbifold Poincar\'e pairing of $C_\mathbf a$ is given by
\ban
\<\phi_{00},\phi_{01}\>_{orb}^{C_\mathbf a}=1,\quad\<\phi_{\alpha i},\phi_{\alpha,a_\alpha-i}\>_{orb}^{C_\mathbf a}=\frac{1}{a_\alpha},\textrm{ and }0\textrm{ otherwise}.
\nan
Let $g_{s's''}=\<\phi_{s'},\phi_{s''}\>_{orb}^{C_\mathbf a}$. Then the matrix $(g_{s's''})$ is nonsingular, and we let $(g^{s's''})=(g_{s's''})^{-1}$. Let $\{\phi^s\}_{s\in\mathscr S}$ be the dual basis of $\mathscr B$ with respect to the orbifold Poincar\'e pairing. Then
\ban
\phi^{00}=\phi_{01},\quad\phi^{01}=\phi_{00},\quad\phi^{\alpha,i}=a_\alpha\phi_{\alpha,a_\alpha-i}.
\nan

The Chen-Ruan product $\cup_{CR}$ on $H^{even}_{orb}(C_\mathbf a)$ satisfies
\ba\label{CR1}
\underbrace{\phi_{\alpha1}\cup_{CR}\cdots\cup_{CR}\phi_{\alpha1}}_k=\left\{\begin{array}{cc}
\phi_{\alpha k},&1\leq k\leq a_\alpha-1,\\\\
\frac 1a_\alpha\phi_{01},&k=a_\alpha,\\\\
0,&k\geq a_\alpha+1,
\end{array}\right.
\na
and 
\ba\label{CR2}
\phi_{\alpha_1,1}\cup_{CR}\phi_{\alpha_2,1}=0(\alpha_1\neq\alpha_2).
\na
We remark that $\cup_{CR}$ also respects the orbifold degree.

The genus-zero potential of $C_\mathbf a$ is a formal function of $\mathbf t=\suml_{s\in\mathscr S}t^s\phi_s\in H^*_{orb}(C_\mathbf a)$ given by
\ban
F(\mathbf t)=\suml_{d=0}^\infty\suml_{m=0}^\infty\<\underbrace{\mathbf t,\cdots,\mathbf t}_m\>^{C_\mathbf a}_{0,m,d}\frac{Q^d}{m!}=\suml_{d=0}^\infty\suml_{m_s\geq0,s\in\mathscr S}\<\bigotimes_{s\in\mathscr S}\phi_s^{\otimes m_s}\>_{0,\suml_{s\in\mathscr S}m_s,d}^{C_\mathbf a}\prodl_{s\in\mathscr S}\frac{(t^s)^{m_s}}{m_s!}Q^d,
\nan
where $\<\bigotimes_{s\in\mathscr S}\phi_s^{\otimes m_s}\>_{0,\suml_{s\in\mathscr S}m_s,d}^{C_\mathbf a}$ is a Gromov-Witten invariant of $C_\mathbf a$ of genus-zero, degree-$d$.
Write
\ban
\mathbf t=t^{00}\phi_{00}+t^{01}\phi_{01}+\mathbf t^\mathbf a,\textrm{ with }\mathbf t^\mathbf a=\suml_{s\in\mathscr T}t^s\phi_s.
\nan
Then from properties of Gromov-Witten invariants (dimension axiom, fundamental class axiom, divisor axiom), the potential has the following form:
\ban
F=\frac 12(t^{00})^2t^{01}+\suml_{\alpha=1}^r\suml_{i=1}^{a_\alpha-1}\frac{t^{00}t^{\alpha,i}t^{\alpha,{a_\alpha-i}}}{2a_\alpha}+A^\mathbf a+\suml_{d=1}^\infty B^\mathbf a_d(Qe^{t^{01}})^d,
\nan
where
\ban
A^\mathbf a&=&\suml_{m=3}^\infty\frac1{m!}\<\underbrace{\mathbf t^\mathbf a,\cdots,\mathbf t^\mathbf a}_m\>_{0,m,0}^{C_\mathbf a}\in\bQ[\{t^s\}_{s\in\mathscr T}],\\
B^\mathbf a_d&=&\suml_{m=0}^\infty\frac1{m!}\<\underbrace{\mathbf t^\mathbf a,\cdots,\mathbf t^\mathbf a}_m\>^{C_\mathbf a}_{0,m,d}\in\bQ[\{t^s\}_{s\in\mathscr T}].
\nan
Set
\ba\label{degt}
\deg t^{00}=1,\deg t^{01}=0,\deg t^{\alpha,i}=1-\frac ia_\alpha,\deg Qe^{t^{01}}=\chi_{orb}(C_\mathbf a).
\na
Then we can use the dimension axiom to show that $F,A^\mathbf a,B^\mathbf a_d$ are weighted homogeneous with degree
\ba\label{degf}
\deg F=\deg A^\mathbf a=2,\quad\deg B^\mathbf a_d=2-d\cdot\chi_{orb}(C_\mathbf a).
\na

The big quantum product is given by
\ban
\phi_{s_1}\star_\mathbf t\phi_{s_2}=\suml_{s\in\mathscr S}F_{s_1,s_2,s}(\mathbf t)\phi^s,
\nan
with coefficients in $\bC[\{t^s\}_{s\in\mathscr T}][[Qe^{t^{01}}]]$. So the big quantum cohomology of $C_\mathbf a$ is
\ban
QH(C_\mathbf a)=H^{even}_{orb}(C_\mathbf a)\otimes_\bC\bC[\{t^s\}_{s\in\mathscr T}][[Qe^{t^{01}}]].
\nan
The big quantum product is clearly commutative, but it is a highly nontrivial fact that it is associative, which is due to the famous WDVV equations satisfied by the genus-zero potential. For $s_1,s_2,s_3,s_4\in\mathscr S$, the WDVV of type $(s_1,s_2;s_3,s_4)$ reads
\ba\label{WDVV}
\suml_{s',s''\in\mathscr S}F_{s_1,s_2,s'}g^{s's''}F_{s'',s_3,s_4}=\suml_{s',s''\in\mathscr S}F_{s_1,s_3,s'}g^{s's''}F_{s'',s_2,s_4}.
\na

The small quantum product is given by
\ban
\phi_{s_1}\circ\phi_{s_2}=\phi_{s_1}\star_{\mathbf t}\phi_{s_2}|_{\mathbf t=0}=\phi_{s_1}\cup_{CR}\phi_{s_2}+\suml_{s\in\mathscr S}\suml_{d=1}^\infty\<\phi_{s_1},\phi_{s_2},\phi_s\>_{0,3,d}^{C_\mathbf a}Q^d\phi^s,
\nan
with coefficients in $\bC[[Q]]$. So the small quantum cohomology of $C_\mathbf a$ is
\ban
qH(C_\mathbf a)=H^{even}_{orb}(C_\mathbf a)\otimes_\bC\bC[[Q]].
\nan
Since $qH(C_\mathbf a)$ is obtained from $QH(C_\mathbf a)$ by modding out $\mathbf t$, it follows that $qH(C_\mathbf a)$ is also commutative and associative.

\subsection{Semisimplicity of quantum cohomology}

All rings and algebras in this subsection are commutative.

Let $k$ be a field of characteristic zero, and let $R$ be a finite dimensional $k$-algebra. We say that $R$ is semisimiple over $k$ if $R$ contains no nilpotent elements, i.e. $\tspec R$ is reduced. Equivalently, $R$ is isomorphic to a product $\prodl_{i=1}^mk_i$, where each $k_i$ is a finite extension field of $k$.

Let $A$ be a $k$-algebra (not necessarily finite dimensional) which is also an integral domain, and let $B$ be an $A$-algebra which is freely finitely generated as an $A$-module. We say that $B$ is (generically) semisimple over $A$ if there exists a non-empty open subset $U$ of $\tspec(A)$ such that, for every $\mathfrak p\in U$, $(\tspec(B))_\mathfrak p$ is reduced, i.e., $B\otimes_Ak(\mathfrak p)$ is semisimple over $k(\mathfrak p)$.

The following lemma is well-known.
\begin{lemma}\label{ABsemisimple}
The followings are equivalent.
\begin{enumerate}
\item $B$ is generically semisimple over $A$.
\item There exists $\mathfrak p\in\tspec(A)$ such that $(\tspec(B))_\mathfrak p$ is semisimple over $k(\mathfrak p)$.
\item $(\tspec(B))_\eta$ is semisimple over $k(\eta)$, where $\eta=(0)\in\tspec(A)$ is the generic point.
\end{enumerate}
\end{lemma}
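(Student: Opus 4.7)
The plan is to use the classical discriminant criterion for semisimplicity over a field. Since $B$ is free of finite rank $n$ over $A$, fix an $A$-basis $e_1,\ldots,e_n$. Multiplication by $b\in B$ is an $A$-linear endomorphism $L_b\colon B\to B$, with trace $\mathrm{tr}(L_b)\in A$. Form the discriminant
\[
D:=\det\bigl(\mathrm{tr}(L_{e_ie_j})\bigr)_{1\leq i,j\leq n}\in A.
\]
This construction commutes with arbitrary base change: for any $\mathfrak p\in\tspec(A)$, the basis $\{e_i\otimes 1\}$ of $B\otimes_A k(\mathfrak p)$ over $k(\mathfrak p)$ has trace-form Gram matrix equal to the image of $\bigl(\mathrm{tr}(L_{e_ie_j})\bigr)$ in $k(\mathfrak p)$, so its discriminant is the image $\bar D$ of $D$ in $k(\mathfrak p)$.

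The main reduction is the classical fact that, for a finite-dimensional commutative algebra $R$ over a field $F$ of characteristic zero, $R$ is semisimple in the sense of this subsection if and only if its trace form is non-degenerate, equivalently its discriminant is nonzero in $F$. The characteristic-zero hypothesis is essential: it ensures the field extensions appearing in the decomposition $R\cong\prod_i F_i$ are automatically separable, so the trace form is non-degenerate precisely when there are no nilpotents. Since $A$ is a $k$-algebra with $k$ of characteristic zero, every residue field $k(\mathfrak p)$ also has characteristic zero, and the criterion applies fiberwise: $B\otimes_A k(\mathfrak p)$ is semisimple over $k(\mathfrak p)$ iff $\bar D\neq 0$ in $k(\mathfrak p)$, i.e., iff $D\notin\mathfrak p$.

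Given this dictionary, the three equivalences fall out formally using that $A$ is an integral domain. The implication $(1)\Rightarrow(2)$ is immediate, since any non-empty open subset of $\tspec(A)$ contains a point. For $(2)\Rightarrow(3)$, if $D\notin\mathfrak p$ for some $\mathfrak p$ then $D\neq 0$ in $A$, hence $D$ has nonzero image in $k(\eta)=\mathrm{Frac}(A)$, so the generic fiber is semisimple. For $(3)\Rightarrow(1)$, semisimplicity of the generic fiber forces $D\neq 0$ in $\mathrm{Frac}(A)$, hence $D\neq 0$ in $A$, and the principal open locus $U:=\{\mathfrak p:D\notin\mathfrak p\}\subseteq\tspec(A)$ is then non-empty and consists of points at which the fiber is semisimple. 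The only step requiring genuine input beyond formal manipulation is the discriminant criterion in characteristic zero; the remainder is bookkeeping on generic points of a domain.
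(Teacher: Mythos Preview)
Your proof is correct. The paper itself does not supply a proof of this lemma, labeling it ``well-known,'' so there is no argument to compare against; your discriminant approach is a standard and clean way to establish the result, and every step is sound in the stated setting (characteristic zero, $A$ a domain, $B$ free of finite rank over $A$).
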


A direct corollary of the above lemma is the following.
\begin{corollary}\label{nilpotent}
$B$ is generically semisimple over $A$ iff $B$ contains no nilpotent elements.
\end{corollary}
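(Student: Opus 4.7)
The plan is to use Lemma~\ref{ABsemisimple}(3) to reduce everything to a statement about a single ring. Let $K = k(\eta) = \mathrm{Frac}(A)$. By that lemma, $B$ is generically semisimple over $A$ iff $B\otimes_A K$ is semisimple over $K$. Since $B\otimes_A K$ is a finite-dimensional commutative $K$-algebra, the definition of semisimplicity over a field recalled just above the lemma says this is in turn equivalent to $B\otimes_A K$ containing no nilpotent elements. So it suffices to prove: $B$ is nilpotent-free iff $B\otimes_A K$ is nilpotent-free.

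To compare the two rings, I would identify $B\otimes_A K$ with the localization $S^{-1}B$, where $S = A\setminus\{0\}$. Using that $B$ is a free $A$-module and $A$ is an integral domain, multiplication by any $a\in S$ acts injectively on $B$: on any $A$-basis $\{e_\lambda\}$ of $B$ one has $a\cdot\sum_\lambda a_\lambda e_\lambda = \sum_\lambda (aa_\lambda)e_\lambda = 0$ only when each $a_\lambda = 0$. Hence every element of $S$ is a non-zero-divisor in $B$, and the canonical map $B\hookrightarrow S^{-1}B = B\otimes_A K$ is injective.

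Both implications then drop out. If $B\otimes_A K$ is reduced, then so is its subring $B$. Conversely, if $B$ is reduced, then for any $b/s\in B\otimes_A K$ with $(b/s)^n = 0$ one has $tb^n = 0$ in $B$ for some $t\in S$; since $t$ is a non-zero-divisor this forces $b^n = 0$, hence $b = 0$, hence $b/s = 0$. Combined with the first paragraph, this yields the corollary. There is no genuine obstacle: once Lemma~\ref{ABsemisimple} is granted, the content is just that freeness of $B$ over the domain $A$ makes the localization map $B\to B\otimes_A K$ injective, so ``reduced'' transfers in both directions.
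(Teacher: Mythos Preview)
Your argument is correct and is precisely the standard verification the paper has in mind: the paper gives no proof at all, merely labeling the statement ``a direct corollary'' of Lemma~\ref{ABsemisimple}. Your reduction via part~(3) to reducedness of $B\otimes_A K$, together with the observation that freeness of $B$ over the domain $A$ makes $B\hookrightarrow B\otimes_A K$ injective so that reducedness transfers both ways, is exactly what is being tacitly invoked.
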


For quantum cohomology, we choose $k=\bC$, $A_Q=\bC[\{t^s\}_{s\in\mathscr T}][[Qe^{t^{01}}]]$, $B_Q=QH(C_\mathbf a)$, $A_q=\bC[[Q]]$ and $B_q=qH(C_\mathbf a)$. Then we have the following Cartesian diagram:
\[
\begin{CD}
\tspec(B_q)@>>>\tspec(B_Q)\\
@VVV @VVV \\
\tspec(A_q)@>>> \tspec(A_Q),
\end{CD}
\]
where the base morphism is the inclusion given by modding out $\mathbf t$ from $A_Q$. So the semisimiplicity of $qH(C_\mathbf a)$ implies that of $QH(C_\mathbf a)$, but the converse is not true (Proposition \ref{small}).

We will use Lemma \ref{ABsemisimple} and Corollary \ref{nilpotent} to deal with the semisimiplicity of the small quantum cohomology. For the big case, we need another ingredient. As in (3) of Lemma \ref{ABsemisimple}, let $\eta$ be the generic point of $A_Q$. Then $\tilde B_Q=B_Q\otimes_{A_Q}k(\eta)$ is a finite-dimensional Frobenius algebra over $k(\eta)$, and it has a distinguished element called quantum Euler class, introduced by Abrams \cite{A}:
\ba\label{qeuler}
e_q=\suml_{s\in\mathscr S}\phi_s\star_\mathbf t\phi^s\in B_Q\subset\tilde B_Q.
\na
\begin{lemma}\label{qeulerinvertible}
$\tilde B_Q$ is semisimple over $k(\eta)$ iff $\det(e_q\star_\mathbf t)\neq0$.
\end{lemma}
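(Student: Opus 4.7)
The plan is to interpret $\det(e_q\star_{\mathbf{t}})$ as (up to a unit) the determinant of the trace form on $\tilde B_Q$, and then invoke the standard Wedderburn criterion that a finite-dimensional commutative algebra over a field of characteristic zero is semisimple iff its trace form is non-degenerate. The key point is that $\tilde B_Q$ is not just a commutative $k(\eta)$-algebra but a \emph{Frobenius} algebra, the pairing $g$ coming from the orbifold Poincar\'e pairing, and the Frobenius identity $\langle ab,c\rangle = \langle a,bc\rangle$ lets one convert the endomorphism-theoretic trace form into the pairing twisted by $e_q$.

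First I would fix the basis $\mathscr B = \{\phi_s\}_{s \in \mathscr S}$ (extended $k(\eta)$-linearly), with dual basis $\{\phi^s\}$ so that $g_{s s'} = \langle \phi_s,\phi_{s'}\rangle$, $\phi^s = \sum g^{s s'}\phi_{s'}$, and recall the Wedderburn fact: since $k(\eta)$ has characteristic zero, the finite-dimensional commutative $k(\eta)$-algebra $\tilde B_Q$ is semisimple iff its trace form $T(x,y) := \mathrm{tr}(L_{xy})$ is non-degenerate. (One direction is immediate from $\tilde B_Q\cong\prod K_i$; the other follows because a non-zero nilpotent element gives a non-zero radical, and any element of the Jacobson radical of a finite-dimensional algebra has zero trace in any representation, so it pairs to zero with everything under $T$.)

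Next I would compute $T$ in terms of $g$ and $e_q$. For $x,y\in\tilde B_Q$, writing $xy\phi_s = \sum_{s'}a_{s,s'}\phi_{s'}$, the definition of the dual basis gives $\langle xy\phi_s,\phi^s\rangle = a_{s,s}$, hence
\[
T(x,y) \;=\; \sum_{s\in\mathscr S}\langle xy\phi_s,\phi^s\rangle.
\]
Applying the Frobenius property twice yields $\langle xy\phi_s,\phi^s\rangle = \langle x, y\phi_s\phi^s\rangle$, and summing over $s$ produces
\[
T(x,y) \;=\; \Big\langle x,\; y\cdot\sum_{s\in\mathscr S}\phi_s\star_{\mathbf t}\phi^s\Big\rangle \;=\; \langle x, y\star_{\mathbf t}e_q\rangle,
\]
using the definition \eqref{qeuler} of $e_q$.

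Finally, reading off matrices in the basis $\mathscr B$: if $G=(g_{s s'})$ is the Gram matrix of the pairing and $E$ is the matrix of the multiplication operator $e_q\star_{\mathbf t}$, then the matrix of $T$ factors as $G\cdot E$, so $\det T = \det G \cdot \det(e_q\star_{\mathbf t})$. Since $G$ is non-degenerate by assumption, $\det T\neq 0$ iff $\det(e_q\star_{\mathbf t})\neq 0$, and combining with the Wedderburn step of the first paragraph gives the claimed equivalence. There is no real obstacle here — the only subtle point is making sure the Frobenius identity $\langle ab,c\rangle=\langle a,bc\rangle$ is available for $\star_{\mathbf t}$, but this is immediate from the standard fact that $\langle\phi_{s_1}\star_{\mathbf t}\phi_{s_2},\phi_{s_3}\rangle = F_{s_1,s_2,s_3}(\mathbf t)$ is totally symmetric in $(s_1,s_2,s_3)$.
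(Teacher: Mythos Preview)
Your argument is correct: the identity $T(x,y)=\langle x,\,e_q\star_{\mathbf t}y\rangle$ together with the non-degeneracy of $G$ reduces the question to the standard trace-form criterion for semisimplicity in characteristic zero, and the Frobenius property $\langle\phi_{s_1}\star_{\mathbf t}\phi_{s_2},\phi_{s_3}\rangle=F_{s_1,s_2,s_3}$ is indeed immediate from the definition of $\star_{\mathbf t}$. The paper itself does not give a proof at all---it simply cites Theorem~3.4 of Abrams \cite{A}---so what you have written is essentially an inlining of Abrams' argument rather than a different approach.
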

\begin{proof}
This is a special case of Theorem 3.4 in \cite{A}.
\end{proof}

We will use Lemma \ref{ABsemisimple} and \ref{qeulerinvertible} to deal with the semisimplicity of the big quantum cohomology.

\if{\section{Big for order $2$}

Let $\sX_r=\bP^1_{\underbrace{2,\cdots,2}_r}$ for $r\geq0$.

\begin{proposition}
The big quantum cohomology of $\sX_r$ is semisimple.
\end{proposition}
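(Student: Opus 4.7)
The plan is to apply Lemma \ref{qeulerinvertible}, which reduces generic semisimplicity of $QH(\sC)$ to showing that $\Delta:=\det(e_q\star_\mathbf t)$ is nonzero as an element of $A_Q=\bC[\{t^s\}_{s\in\mathscr T}][[q]]$, where $q:=Qe^{t^{01}}$. I expand
$$\Delta=\suml_{d\geq 0}D_d\,q^d,\qquad D_d\in\bC[\{t^s\}_{s\in\mathscr T}],$$
and aim to prove $D_0=0$ together with $D_1\not\equiv 0$; the latter alone already gives $\Delta\neq 0$ and hence the proposition.

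The vanishing of $D_0$ is structural and short. By the divisor and fundamental class axioms (already used in the excerpt to extract the displayed form of $F$), the only dependence of $F$ on $t^{01}$ at $q=0$ is through the cubic term $\tfrac{1}{2}(t^{00})^2 t^{01}$. Differentiating, $\partial_{01}\partial_s\partial_{s'}F|_{q=0}=0$ except when $s=s'=(0,0)$. Via $\phi_{s_1}\star_\mathbf t\phi_{s_2}=\suml_s F_{s_1,s_2,s}\phi^s$ this translates to $\phi_{01}\star_\mathbf t\phi_{01}|_{q=0}=0$ and $\phi_{01}\star_\mathbf t\phi_s|_{q=0}=0$ for every $s\neq(0,0)$. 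Thus $\phi_{01}$ is a nonzero nilpotent in the quotient algebra $QH(\sC)/q\cdot QH(\sC)$ over $\bC[\{t^s\}_{s\in\mathscr T}]$, and Corollary \ref{nilpotent} forces $D_0=\det(e_q\star_\mathbf t)|_{q=0}=0$.

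For $D_1\not\equiv 0$, I would use first-order perturbation theory for determinants. Writing $L(q)=L_0+qL_1+O(q^2)$ for the multiplication-by-$e_q$ operator and observing that $\bC\phi_{01}\subseteq\ker L_0$ by the previous step, the coefficient $D_1$ becomes a cofactor-type determinant extracted from $L_0$ on a complement of $\bC\phi_{01}$ together with the $\phi_{01}$-component of $L_1\phi_{01}$. Here Proposition \ref{decompositionAB} is essential: it provides a decomposition $A^\mathbf a=\suml_\alpha A^{(\alpha)}(t^{\alpha,\bullet})$, which forces $\phi_{\alpha,i}\star_\mathbf t\phi_{\beta,j}|_{q=0}=0$ for $\alpha\neq\beta$, and a parallel explicit formula for $B^\mathbf a_1$ in terms of SFT invariants of orbifold caps and Hurwitz numbers. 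The resulting block structure (indexed by orbifold points) allows the cofactor determinant, after WDVV simplifications, to factor as a product of local contributions, one per orbifold point; each local factor is then a nonzero polynomial in $\{t^{\alpha,\bullet}\}$ at a generic $\mathbf t^\mathbf a$.

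The main obstacle is this second step: identifying the block-diagonal structure of $L_0$ and verifying the non-vanishing of the local factors. Both hinge on Proposition \ref{decompositionAB}, whose proof in turn rests on Rossi's symplectic-field-theoretic computation of the genus-zero potential of orbi-curves; without an explicit handle on $A^\mathbf a$ and $B^\mathbf a_1$, the determinant on the complement of $\bC\phi_{01}$ cannot be controlled. With the decomposition in hand, the remainder of the argument is bookkeeping with WDVV and the dimension axiom.
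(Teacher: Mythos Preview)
Your outline is the paper's strategy: reduce to $\det(e_q\star_{\mathbf t})\neq 0$ via Lemma~\ref{qeulerinvertible}, observe the $q^0$-coefficient vanishes because $e_q\star_{\mathbf t}\phi_{01}=O(q)$, and show the $q^1$-coefficient is a nonzero polynomial. For $\sX_r$ the paper carries this out by bare-hands computation---the potential is explicitly $\tfrac12(t^{00})^2t^{01}+\tfrac14\,t^{00}\sum_i t_i^2-\tfrac{1}{96}\sum_i t_i^4+q\prod_i t_i+O(q^2)$, so one writes down $e_q=-\sum_i t_i x_i+(r+2)p+O(q^2)$ and the matrix $M$ directly and reads off $\det M=-q\prod_i t_i^3/2^{r-2}+O(q^2)$---whereas you invoke the block machinery of Proposition~\ref{decompositionAB} and WDVV, which is the paper's route for arbitrary $\mathbf a$ in Section~3. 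For $\sX_r$ each $a_\alpha=2$, so your local blocks are $1\times 1$ and the ``each local factor is nonzero'' step is immediate once the correct cofactor is identified.

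One genuine correction is needed in your perturbation step: the scalar multiplying the cofactor determinant is the $\phi_{00}$-component of $L_1\phi_{01}$, not the $\phi_{01}$-component. Since multiplication by $e_q|_{q=0}$ is self-adjoint for the orbifold Poincar\'e pairing, the cokernel functional dual to $\ker L_0=\bC\phi_{01}$ is $\langle\phi_{01},-\rangle$, which extracts the $\phi_{00}$-coefficient; the paper computes this to be $2B^{\mathbf a}_1=2\prod_i t_i$ (in the general case via the WDVV equation of type $(i,a_\alpha-i;(01),(01))$). The $\phi_{01}$-component of $L_1\phi_{01}$ actually vanishes here, so with your stated component the argument would collapse.
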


The genus-zero potential of $\sX_r$ is
\ban
F=\frac 12t_0^2s+\frac 14t_0\suml_{i=1}^rt_i^2-\frac{1}{96}\suml_{i=1}^rt_i^4+e^s\prodl_{i=1}^rt_i+O(e^{2s}).
\nan
So the quantum product is given by
\ban
x_i\star x_i&=&-\frac{t_i}{2}x_i+\frac p2+O(e^{2s}),\\
x_i\star x_j&=&e^s\frac{\prodl_{i'=1}^rt_{i'}}{t_it_j}+\suml_{k\neq i,j}2e^s\frac{\prodl_{i'=1}^rt_{i'}}{t_it_jt_k}x_k+O(e^{2s}),(i\neq j)\\
x_i\star p&=&e^s\frac{\prodl_{i'=1}^rt_{i'}}{t_i}+\suml_{k\neq i}2e^s\frac{\prodl_{i'=1}^rt_{i'}}{t_it_k}x_k+O(e^{2s}),\\
p\star p&=&e^s\prodl_{i'=1}^rt_{i'}+\suml_{k=1}^r2e^s\frac{\prodl_{i'=1}^rt_{i'}}{t_k}x_k+O(e^{2s}).
\nan
Then the quantum Euler class is
\ban
e_q=\mathbbm 1\star p+\suml_{i=1}^rx_i\star 2x_i+p\star\mathbbm1=-\suml_{i=1}^rt_ix_i+(r+2)p+O(e^{2s}),
\nan
and the matrix of $(e_q\star)$ with respect to the basis $\{\mathbbm1,x_1,\cdots,x_r,p\}$ is
\ban
M=\left[\begin{array}{cccccc}
0&3e^s\frac{\prodl_{i'=1}^rt_{i'}}{t_1}&3e^s\frac{\prodl_{i'=1}^rt_{i'}}{t_2}&\cdots&3e^s\frac{\prodl_{i'=1}^rt_{i'}}{t_r}&2e^s\prodl_{i'=1}^rt_{i'}\\
-t_1&\frac{t_1^2}{2}&8e^s\frac{\prodl_{i'=1}^rt_{i'}}{t_2t_1}&\cdots&8e^s\frac{\prodl_{i'=1}^rt_{i'}}{t_rt_1}&6e^s\frac{\prodl_{i'=1}^rt_{i'}}{t_1}\\
-t_2&8e^s\frac{\prodl_{i'=1}^rt_{i'}}{t_1t_2}&\frac{t_2^2}{2}&\cdots&8e^s\frac{\prodl_{i'=1}^rt_{i'}}{t_rt_2}&6e^s\frac{\prodl_{i'=1}^rt_{i'}}{t_2}\\
\vdots&\vdots&\vdots&\ddots&\vdots&\vdots\\
-t_r&8e^s\frac{\prodl_{i'=1}^rt_{i'}}{t_1t_r}&8e^s\frac{\prodl_{i'=1}^rt_{i'}}{t_2t_r}&\cdots&\frac{t_r^2}{2}&6e^s\frac{\prodl_{i'=1}^rt_{i'}}{t_r}\\
r+2&-\frac{t_1}{2}&-\frac{t_2}{2}&\cdots&-\frac{t_r}{2}&0
\end{array}\right]+O(e^{2s}).
\nan
One can check that 
\ban
\det M=-e^s\frac{\prodl_{i'=1}^rt_{i'}^3}{2^{r-2}}+O(e^{2s}),
\nan
which implies that $e_q$ is invertible in the big quantum cohomology algebra of $\sX_r$. As a consequence, the big quantum cohomology algebra of $\sX_r$ is semisimple.}\fi

\section{Big quantum cohomology}

In this section, we prove Proposition \ref{big} and Corollary \ref{cor}.

\subsection{Genus-zero potential}

Let $\mathbf a=(a_1,\cdots,a_r)$ be a tuple of positive integers, with $r\geq1$ and each $a_\alpha\geq2$. From Lemma \ref{ABsemisimple} and \ref{qeulerinvertible}, to prove the semisimplicty of $QH(\bP^1_\mathbf a)$, it suffices to show that $\det(e_q\star_\mathbf t)\neq0$. Note that $\det(e_q\star_\mathbf t)\in\bC[\{t^s\}_{s\in\mathscr T}][[Qe^{t^{01}}]]$, and we will show that the coefficient of $\det(e_q\star_\mathbf t)$ at $(Qe^{t^{01}})^0$ is zero, but the coefficient at $(Qe^{t^{01}})^1$ is nonvanishing. To this end, we need to understand the structure of the degree-zero and degree-one parts of the genus-zero potential of $\bP^1_\mathbf a$. 

Recall that the genus-zero potential for $\bP^1_\mathbf a$ is
\ba\label{potential}
F=\frac 12(t^{00})^2t^{01}+\suml_{\alpha=1}^r\suml_{i=1}^{a_\alpha-1}\frac{t^{00}t^{\alpha,i}t^{\alpha,{a_\alpha-i}}}{2a_\alpha}+A^\mathbf a+\suml_{d=1}^\infty B^\mathbf a_d(Qe^{t^{01}})^d.
\na
Rossi \cite{R} used the Symplectic Field Theory (SFT) technique \cite{EGH} to express the potential in terms of SFT invariants of orbifold caps and connected Hurwitz numbers. The result is  (see formula (2) in \cite{R})
\ba
\suml_{\alpha=1}^r\suml_{i=1}^{a_\alpha-1}\frac{t^{00}t^{\alpha,i}t^{\alpha,{a_\alpha-i}}}{2a_\alpha}+A^\mathbf a(\mathbf t^\mathbf a)=\suml_{\alpha=1}^r\mathbf F_{a_\alpha;0}(t^{00},t^{\alpha,1},\cdots,t^{\alpha,a_\alpha-1}),\label{potentialA}
\na
and
\ba
B^\mathbf a_d(\mathbf t^\mathbf a)=\suml_{|\mu^1|,\cdots,|\mu^r|=d}H^0_{0,d}(\mu^1,\cdots,\mu^r)\prodl_{\alpha=1}^r\prodl_{w=1}^{a_\alpha}\mathbf F_{a_\alpha;w}(t^{\alpha1},\cdots,t^{\alpha,a_\alpha-1})^{m^\alpha_w}.\label{potentialB}
\na
Here each $\mu^\alpha=(1^{m^\alpha_1}2^{m^\alpha_2}\cdots)$ is a partition of $d$ with $m^\alpha_w=0$ for $w>a_\alpha$, and $H^0_{0,d}(\mu^1,\cdots,\mu^r)$ is the Hurwitz number of genus-zero, degree-$d$ connected coverings over $\bP^1$ with ramification profile $\mu^1,\cdots,\mu^r$. Moreover, $\mathbf F_{a;w}(0\leq w\leq a)$ come from the SFT potential of the orbifold cap $[\bC/\mu_a]$: 
\ban
\mathbf F_a=\frac 1\hbar(\mathbf F_{a;0}+\suml_{w=1}^a\mathbf F_{a;w}\frac{p_w}{w}),
\nan 
with
\ban
\mathbf F_{a;w}=\left\{\begin{array}{cc}
\suml_{\substack{j_0,j_1,\cdots,j_{a-1}\in\bZ_{\geq0}\\\suml_{k=0}^{a-1}(a-k)j_k=2a}}A^a_{j_0,j_1,\cdots,j_{a-1}}\prodl_{i=0}^{a-1}(t^i)^{j_i},&w=0,\\
\suml_{\substack{j_1,\cdots,j_{a-1}\in\bZ_{\geq0}\\\suml_{k=1}^{a-1}(a-k)j_k=a-w}}B^a_{j_1,\cdots,j_{a-1};w}\prodl_{i=1}^{a-1}(t^i)^{j_i},&1\leq w\leq a.
\end{array}\right.
\nan
Here $A^a_{j_0,j_1,\cdots,j_{a-1}}$'s and $B^a_{j_1,\cdots,j_{a-1};w}$'s are SFT invariants of the orbifold cap $[\bC/\mu_a]$. We refer interested readers to \cite{EGH, R} for detailed explanations of SFT techniques.

The following result follows directly from \eqref{potential}, \eqref{potentialA}, \eqref{potentialB}, which will be used in Section 3.3.
\begin{proposition}\label{decompositionAB}
We have the following decomposition for the degree-zero and degree-one parts of the genus-zero potential of $\bP^1_\mathbf a$:
\ban
A^\mathbf a(\mathbf t^\mathbf a)=\suml_{\alpha=1}^rA^{a_\alpha}(t^{\alpha,1},\cdots,t^{\alpha,a_\alpha-1}),\quad B^\mathbf a_1(\mathbf t^\mathbf a)=\prodl_{\alpha=1}^rB^{a_\alpha}_1(t^{\alpha,1},\cdots,t^{\alpha,a_\alpha-1}).
\nan
\end{proposition}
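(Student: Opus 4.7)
The plan is to extract both identities directly from Rossi's formulas \eqref{potentialA} and \eqref{potentialB} by first specializing to the single-orbi-point case $r=1$, and then comparing with the multi-point case. The whole proposition is an exercise in matching two applications of the same master formula.

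For the first identity, I would apply \eqref{potentialA} to the one-point orbifold $\bP^1_{a_\alpha}$ (that is, take $\mathbf a=(a_\alpha)$). This yields
\ban
\sum_{i=1}^{a_\alpha-1}\frac{t^{00}t^{\alpha,i}t^{\alpha,a_\alpha-i}}{2a_\alpha}+A^{a_\alpha}(t^{\alpha,1},\dots,t^{\alpha,a_\alpha-1})=\mathbf F_{a_\alpha;0}(t^{00},t^{\alpha,1},\dots,t^{\alpha,a_\alpha-1}).
\nan
Now sum over $\alpha=1,\dots,r$ and compare with \eqref{potentialA} applied to $\bP^1_\mathbf a$ itself. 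The quadratic pieces cancel on both sides, leaving exactly $A^\mathbf a=\sum_{\alpha=1}^r A^{a_\alpha}$. The observation here is that the SFT contribution $\mathbf F_{a_\alpha;0}$ from each orbifold cap depends only on the variables $(t^{00},t^{\alpha,\bullet})$ attached to that cap, so Rossi's formula is already a sum over caps.

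For the second identity, I would specialize \eqref{potentialB} to $d=1$. Every partition $\mu^\alpha$ of $1$ is forced to be the single partition $(1)$, so $m^\alpha_1=1$ and $m^\alpha_w=0$ for $w\ge 2$. Only one tuple $(\mu^1,\dots,\mu^r)$ appears in the sum. The connected Hurwitz number $H^0_{0,1}((1),\dots,(1))$ counts degree-one covers of $\bP^1$, which is $1$ (the identity cover, with trivial automorphism group). Hence
\ban
B^\mathbf a_1(\mathbf t^\mathbf a)=\prod_{\alpha=1}^r\mathbf F_{a_\alpha;1}(t^{\alpha,1},\dots,t^{\alpha,a_\alpha-1}).
\nan
Specializing the same reasoning to $r=1$ gives $B^{a_\alpha}_1=\mathbf F_{a_\alpha;1}$, and substituting back yields $B^\mathbf a_1=\prod_\alpha B^{a_\alpha}_1$.

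There is no substantive obstacle: the whole argument is bookkeeping once one trusts Rossi's expression. The only thing worth double-checking is the combinatorial claim $H^0_{0,1}((1),\dots,(1))=1$ for an arbitrary number $r$ of ramification profiles, which follows because a degree-one cover of $\bP^1$ is an isomorphism and hence is unramified over every point, so prescribing the trivial profile at finitely many points imposes no constraint and gives a unique cover with no nontrivial automorphisms. With that in hand both displayed identities fall out of a line-by-line comparison.
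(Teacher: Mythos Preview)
Your proposal is correct and is exactly the argument the paper has in mind: the paper's ``proof'' is the single sentence that the proposition follows directly from \eqref{potential}, \eqref{potentialA}, \eqref{potentialB}, and what you have written is precisely the unpacking of that claim---specialize Rossi's formulas to $r=1$ to identify $A^{a_\alpha}$ and $B^{a_\alpha}_1$ with $\mathbf F_{a_\alpha;0}$ (minus the quadratic piece) and $\mathbf F_{a_\alpha;1}$, then compare with the general case, using that for $d=1$ the only admissible tuple of partitions is $((1),\dots,(1))$ and $H^0_{0,1}((1),\dots,(1))=1$.
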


\subsection{Special case: tear drops}

In this subsection, as a warmup, we prove the semisimplicity of big quantum cohomology of tear drops $\bP^1_a(a\geq2)$. To ease notations, throughout this subsection, for $1\leq i\leq a-1$, we set
\ban
t^i:=t^{1i},\quad\phi_i:=\phi_{1i}.
\nan

From the Riemann-Hurwitz formula, we have
\ban
H^0_{0,d}(\mu)=\left\{\begin{array}{cc}
1,&\textrm{if }d=1\textrm{ and }\mu=(1),\\
0,&\textrm{otherwise}.
\end{array}\right.
\nan
So from \eqref{potential}, \eqref{potentialA}, \eqref{potentialB}, the genus-zero potential of $\bP^1_a$ has the following simple form:
\ban
F=\frac 12(t^{00})^2t^{01}+\frac{1}{2a}\suml_{i=1}^{a-1}t^{00}t^it^{a-i}+A^a+B^a_1Qe^{t^{01}},
\nan
with $A^a,B^a_1\in\bQ[t^1,\cdots,t^{a-1}]$. So the big quantum product is given by
\ban
\phi_i\star_\mathbf t\phi_j&=&\delta_{j,a-i}\frac{\phi_{01}}{a}+\suml_{k=1}^{a-1}A^a_{i,j,a-k}\cdot a\phi_{k}+
Qe^{t^{01}}\bigg((B^a_1)_{i,j}+\suml_{k=1}^{a-1}(B^a_1)_{i,j,a-k}\cdot a\phi_{k}\bigg),\\
\phi_i\star_\mathbf t\phi_{01}&=&Qe^{t^{01}}\bigg((B^a_1)_i+\suml_{k=1}^{a-1}(B^a_1)_{i,a-k}\cdot a\phi_{k}\bigg),\\
\phi_{01}\star_\mathbf t\phi_{01}&=&Qe^{t^{01}}\bigg(B^a_1+\suml_{k=1}^{a-1}(B^a_1)_{a-k}\cdot a\phi_{k}\bigg).
\nan

\begin{lemma}\label{B}
$B^a_1=t^1+O(t^{>1})$.
\end{lemma}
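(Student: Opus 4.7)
The plan is to combine Rossi's SFT formula \eqref{potentialB} with the weighted-homogeneity constraint from \eqref{degf}. Specializing \eqref{potentialB} to $\mathbf a=(a)$ and $d=1$, the only partition of $1$ is $\mu=(1)$, giving $m_1=1$ and $m_w=0$ for $w\geq 2$; since a degree-one cover $\bP^1\to\bP^1$ is unique up to isomorphism, $H^0_{0,1}((1))=1$, so
\[ B^a_1 = \mathbf{F}_{a;1}. \]

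Next I would invoke the weighted-homogeneity of $B^a_1$. By \eqref{degf}, $\deg B^a_1 = 2-\chi_{orb}(\bP^1_a) = 1-1/a$, so any monomial $\prod_{i=1}^{a-1}(t^i)^{j_i}$ appearing in $B^a_1$ satisfies $\sum_{i=1}^{a-1}(a-i)j_i = a-1$, which is exactly the SFT constraint defining the support of $\mathbf{F}_{a;1}$ (so the two descriptions are consistent). If $\sum_i j_i=0$ the constraint forces $0=a-1$, impossible since $a\geq 2$, so $B^a_1$ has no constant term. If $\sum_i j_i=1$, then $a-i=a-1$ forces $i=1$, so $t^1$ is the unique linear monomial that can appear. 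This reduces the lemma to showing that the coefficient $c$ of $t^1$ in $B^a_1$ equals $1$.

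Finally I would evaluate $c$. Unwinding the definition of $B^a_1$, $c$ is the one-point, degree-one orbifold Gromov--Witten invariant $\langle\phi_1\rangle^{\bP^1_a}_{0,1,1}$; equivalently it is the normalized SFT invariant $B^a_{1,0,\ldots,0;1}$ of the orbifold cap $[\bC/\mu_a]$ from \cite{R}. The moduli stack $\overline{M}_{0,1}(\bP^1_a,1)$ in the twisted sector $(1,1)$ is virtually zero-dimensional and geometrically parameterizes the single isomorphism-class of degree-one orbifold maps $\bP^1_a\to\bP^1_a$ sending the twisted marking to the stacky point, yielding the normalized count $c=1$.

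The hard part is this last step, the identification of the initial coefficient; everything before it is a formal consequence of the degree axiom \eqref{degf}. Once the basic normalization $c=1$ is in hand (as a Gromov--Witten count for $\bP^1_a$ or via Rossi's SFT invariant of the orbifold cap), the lemma follows.
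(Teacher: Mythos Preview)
Your argument is correct and matches the paper's own proof, which simply cites the dimension axiom and the fact $\langle\phi_1\rangle^{\bP^1_a}_{0,1,1}=1$; your expansion of these two ingredients is exactly what the paper intends. The detour through Rossi's formula to identify $B^a_1=\mathbf F_{a;1}$ is not needed for the lemma---the weighted-homogeneity from \eqref{degf} alone already forces the monomial support, and the coefficient of $t^1$ is read off directly from the definition of $B^a_1$ as a generating series of Gromov--Witten invariants---but it is harmless.
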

\begin{proof}
This follows from the dimension axiom and the fact that $\<\phi_1\>_{0,1,1}^{\bP^1_a}=1$.
\end{proof}

\begin{lemma}\label{Bzero}
For $i=1,\cdots,a-1$, we have $(B^a_1)_{i,a-i}=0$.
\end{lemma}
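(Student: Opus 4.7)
The plan is to invoke the weighted homogeneity of $B^a_1$ recorded in \eqref{degf} and show that $(B^a_1)_{i,a-i}$ is forced to have strictly negative weighted degree, hence must vanish as a polynomial in $\bQ[t^1,\ldots,t^{a-1}]$.

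First I would compute $\deg B^a_1$. Since $\chi_{orb}(\bP^1_a)=2-(1-1/a)=1+1/a$, equation \eqref{degf} yields $\deg B^a_1=2-\chi_{orb}(\bP^1_a)=1-1/a$. Partial differentiation preserves weighted homogeneity, and by \eqref{degt} the variable $t^j=t^{1,j}$ carries weighted degree $1-j/a$. Hence
\ban
\deg(B^a_1)_{i,a-i}=\left(1-\frac{1}{a}\right)-\left(1-\frac{i}{a}\right)-\left(1-\frac{a-i}{a}\right)=-\frac{1}{a}.
\nan

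To finish, I would note that each $t^j$ for $1\le j\le a-1$ has strictly positive weighted degree $1-j/a>0$, so every monomial in $\bQ[t^1,\ldots,t^{a-1}]$ has non-negative weighted degree. A weighted homogeneous polynomial of strictly negative degree $-1/a$ is therefore identically zero, which gives $(B^a_1)_{i,a-i}=0$.

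There is no substantive obstacle here; the vanishing is forced purely by the dimension axiom through the weighted grading on the potential, and the only bookkeeping is the arithmetic above. One could alternatively specialize to $\mathbf{t}^\mathbf{a}=0$ and verify directly that the two-point invariant $\langle\phi_i,\phi_{a-i}\rangle_{0,2,1}^{\bP^1_a}$ vanishes for dimensional reasons, but the polynomial viewpoint above dispenses with having to analyze each higher-point insertion $\langle\phi_i,\phi_{a-i},\phi_{j_1},\ldots,\phi_{j_m}\rangle_{0,2+m,1}^{\bP^1_a}$ separately.
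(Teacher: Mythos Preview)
Your proof is correct and follows essentially the same approach as the paper: both compute $\deg B^a_1=(a-1)/a$ and $\deg t^j=(a-j)/a$ from \eqref{degt}--\eqref{degf}, obtain $\deg(B^a_1)_{i,a-i}=-1/a<0$, and conclude vanishing. Your write-up simply spells out the computation of $\chi_{orb}(\bP^1_a)$ and the reason a negative-degree polynomial in positively-graded variables must vanish, which the paper leaves implicit.
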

\begin{proof}
From \eqref{degt} and \eqref{degf}, we have
\ban
\deg t^i=\frac{a-i}{a},\quad\deg B^a_1=\frac{a-1}{a}.
\nan
Therefore, 
\ban
\deg(B^a_1)_{i,a-i}=\frac{a-1}{a}-\frac{a-i}{a}-\frac{i}{a}=\frac{-1}{a}<0.
\nan
\end{proof}

\begin{lemma}
The quantum Euler class has the form
\ban
e_q=(a+1)\phi_{01}+a^2\suml_{k=1}^{a-1}\suml_{i=1}^{a-1}A^a_{i,a-i,a-k}\phi_{k}.
\nan
\end{lemma}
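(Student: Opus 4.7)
The plan is to expand the definition \eqref{qeuler} directly and collect terms, using the formulas for the quantum product displayed in this subsection together with the two preceding lemmas and a degree argument.

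First I would use the dual basis computed in Section 2.1, namely $\phi^{00}=\phi_{01}$, $\phi^{01}=\phi_{00}$, and $\phi^{1,i}=a\phi_{1,a-i}$, to rewrite
\[
e_q=\phi_{00}\star_\mathbf t\phi_{01}+\phi_{01}\star_\mathbf t\phi_{00}+a\suml_{i=1}^{a-1}\phi_i\star_\mathbf t\phi_{a-i}.
\]
Since $\phi_{00}=\mathbbm1$ is the identity for $\star_\mathbf t$, the first two summands contribute $2\phi_{01}$. What remains is to compute the diagonal sum $a\sum_{i=1}^{a-1}\phi_i\star_\mathbf t\phi_{a-i}$.

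Next I would substitute the explicit formula
\[
\phi_i\star_\mathbf t\phi_{a-i}=\frac{\phi_{01}}{a}+\suml_{k=1}^{a-1}A^a_{i,a-i,a-k}\cdot a\phi_k+Qe^{t^{01}}\Big((B^a_1)_{i,a-i}+\suml_{k=1}^{a-1}(B^a_1)_{i,a-i,a-k}\cdot a\phi_k\Big),
\]
valid because $\delta_{a-i,a-i}=1$. The constant-in-$Q$ terms contribute $(a-1)\phi_{01}$ from the $\phi_{01}/a$ pieces (summed over $i$ and multiplied by $a$), and the announced double sum $a^2\sum_{k,i}A^a_{i,a-i,a-k}\phi_k$. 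Together with the $2\phi_{01}$ from the first step, this already yields the stated expression, provided the $Qe^{t^{01}}$-terms vanish.

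The key remaining step, and the only non-trivial one, is to show that both $(B^a_1)_{i,a-i}$ and $(B^a_1)_{i,a-i,a-k}$ vanish identically. The former is exactly Lemma \ref{Bzero}. For the latter I would repeat that argument: from \eqref{degt}, \eqref{degf} one has $\deg B^a_1=\frac{a-1}{a}$ and $\deg t^j=\frac{a-j}{a}$, so
\[
\deg(B^a_1)_{i,a-i,a-k}=\frac{a-1}{a}-\frac{a-i}{a}-\frac{i}{a}-\frac{k}{a}=\frac{-1-k}{a}<0.
\]
Since $B^a_1$ is a weighted-homogeneous polynomial, any derivative of strictly negative weighted degree must vanish. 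This kills the entire $Qe^{t^{01}}$-contribution and completes the identification of $e_q$. I expect this degree bookkeeping to be the only conceptual point; everything else is a direct expansion, so no serious obstacle is anticipated.
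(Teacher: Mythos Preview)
Your proposal is correct and follows exactly the paper's approach: expand $e_q$ via \eqref{qeuler} and the product formulas, then kill the $Qe^{t^{01}}$-terms. The only difference is cosmetic: the paper cites Lemma \ref{Bzero} alone, since $(B^a_1)_{i,a-i}\equiv 0$ already forces $(B^a_1)_{i,a-i,a-k}=\partial_{t^{a-k}}(B^a_1)_{i,a-i}=0$, so your separate degree computation for the third derivative, while valid, is unnecessary.
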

\begin{proof}
From \eqref{qeuler}, we have
\ban
e_q=(a+1)\phi_{01}+a^2\suml_{k=1}^{a-1}\suml_{i=1}^{a-1}A^a_{i,a-i,a-k}\phi_{k}+Qe^{t^{01}}\bigg(a\suml_{i=1}^{a-1}(B^a_1)_{i,a-i}+a^2\suml_{k=1}^{a-1}\suml_{i=1}^{a-1}(B^a_1)_{i,a-i,a-k}\phi_{k}\bigg).
\nan
Now the required formula follows from Lemma \ref{Bzero}.
\end{proof}

We observe that $e_q\star_\mathbf t\phi_{01}=O(Qe^{t^{01}})$. Moreover, consider the coefficients of $e_q\star_\mathbf t\phi_{01}$ with respect to $\mathscr B$, we have the following observation.
\begin{lemma}
The coefficient of $e_q\star_\mathbf t\phi_{01}$ at $\phi_{00}$ is $2B^a_1Qe^{t^{01}}$.
\end{lemma}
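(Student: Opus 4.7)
The strategy is a direct expansion followed by a single WDVV reduction. Using the formula for $e_q$ from the preceding lemma together with the expressions for $\phi_k\star_\mathbf t\phi_{01}$ and $\phi_{01}\star_\mathbf t\phi_{01}$ recorded in Section 3.2, the coefficient of $\phi_{00}$ in $e_q\star_\mathbf t\phi_{01}$ reads
\ban
Qe^{t^{01}}\bigg[(a+1)B^a_1+a^2\sum_{k=1}^{a-1}\sum_{i=1}^{a-1}A^a_{i,a-i,a-k}(B^a_1)_k\bigg].
\nan
Hence the assertion reduces to proving the identity
\ban
a^2\sum_{k=1}^{a-1}\sum_{i=1}^{a-1}A^a_{i,a-i,a-k}(B^a_1)_k=-(a-1)B^a_1,
\nan
which would collapse the bracketed factor to $2B^a_1$.

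To establish this identity, I would apply the WDVV equation \eqref{WDVV} with indices $(s_1,s_2;s_3,s_4)=(01,01;i,a-i)$ for each fixed $i\in\{1,\ldots,a-1\}$, and then sum over $i$. From the explicit form \eqref{potential} of $F$, a short computation gives the relevant third partials: $F_{01,01,01}=B^a_1Qe^{t^{01}}$, $F_{01,01,k}=(B^a_1)_kQe^{t^{01}}$, $F_{00,i,a-i}=1/a$, and $F_{a-k,i,a-i}=A^a_{a-k,i,a-i}+(B^a_1)_{a-k,i,a-i}Qe^{t^{01}}$, while the mixed partials $F_{01,01,00}$, $F_{00,01,a-i}$ and $F_{01,i,00}$ all vanish. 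Substituting these, the left-hand side of WDVV has leading term
\ban
\bigg(\frac{B^a_1}{a}+a\sum_{k=1}^{a-1}(B^a_1)_kA^a_{a-k,i,a-i}\bigg)Qe^{t^{01}},
\nan
whereas the right-hand side begins only at order $(Qe^{t^{01}})^2$. Equating coefficients at order $Qe^{t^{01}}$, summing over $i$, and using the symmetry $A^a_{a-k,i,a-i}=A^a_{i,a-i,a-k}$ yields the desired identity.

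The main point of care is the order analysis on the right-hand side of WDVV: each nonvanishing summand factors as $F_{01,i,s'}\cdot g^{s's''}\cdot F_{s'',01,a-i}$, and one must verify that \emph{both} factors carry a factor of $Qe^{t^{01}}$, because the would-be degree-zero pieces $F_{01,i,00}$ and $F_{00,01,a-i}$ are forced to vanish by the form of $F$. This is what makes WDVV collapse at first order to a \emph{linear} equation in $B^a_1$ and its derivatives, giving the identity above rather than a quadratic constraint, and thereby matching the $\phi_{00}$-coefficient with $2B^a_1Qe^{t^{01}}$ exactly.
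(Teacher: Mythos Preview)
Your proof is correct and follows essentially the same route as the paper. The paper computes $\langle e_q\star_\mathbf t\phi_{01},\phi_{01}\rangle_{orb}^{\bP^1_a}$ to get the same bracketed expression, then invokes WDVV of type $(i,a-i;01,01)$ to obtain the identity $\sum_{k}A^a_{i,a-i,a-k}(B^a_1)_k=-B^a_1/a^2$ (your WDVV with indices $(01,01;i,a-i)$ is the same equation with the two sides swapped), and sums over $i$ to collapse $(a+1)B^a_1$ to $2B^a_1$.
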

\begin{proof}
One can check that
\ban
\<e_q\star_\mathbf t\phi_{01},\phi_{01}\>_{orb}^{\bP^1_a}&=&Qe^{t^{01}}\bigg((a+1)B^a_1+a^2\suml_{k=1}^{a-1}\suml_{i=1}^{a-1}A^a_{i,a-i,a-k}(B^a_1)_k\bigg).
\nan
Using WDVV \eqref{WDVV} of type $(i,a-i;(01),(01))$, we obtain
\ba\label{AB}
\suml_{k=1}^{a-1}A^a_{i,a-i,a-k}(B^a_1)_k=-\frac{B^a_1}{a^2},
\na
which implies the required result.
\end{proof}

Direct calculation gives
\ban
e_q\star_\mathbf t\phi_j=a\suml_{i=1}^{a-1}A^a_{i,a-i,j}\phi_{01}+a^3\suml_{l=1}^{a-1}\suml_{k=1}^{a-1}\suml_{i=1}^{a-1}A^a_{i,a-i,a-k}A^a_{k,j,a-l}\phi_l+O(Qe^{t^{01}}).
\nan
Now consider the matrix $M$ of $(e_q\star_\mathbf t)$ with respect to the basis $\mathscr B$:
\ban
(e_q\star_\mathbf t)[\phi_{00},\phi_{01},\phi_1,\cdots,\phi_{a-1}]=[\phi_{00},\phi_{01},\phi_1,\cdots,\phi_{a-1}]M.
\nan
Here $M$ has the form
\ban
\left[\begin{array}{ccccc}
0&2B^a_1Qe^{t^{01}}&O(Qe^{t^{01}})&\cdots&O(Qe^{t^{01}})\\
a+1&O(Qe^{t^{01}})&a\suml_{i=1}^{a-1}A^a_{i,a-i,1}+O(Qe^{t^{01}})&\cdots&a\suml_{i=1}^{a-1}A^a_{i,a-i,a-1}+O(Qe^{t^{01}})\\
a^2\suml_{i=1}^{a-1}A^a_{i,a-i,a-1}&O(Qe^{t^{01}})&a^3\suml_{k=1}^{a-1}\suml_{i=1}^{a-1}A^a_{i,a-i,a-k}A^a_{k,1,a-1}+O(Qe^{t^{01}})&\cdots&a^3\suml_{k=1}^{a-1}\suml_{i=1}^{a-1}A^a_{i,a-i,a-k}A^a_{k,a-1,a-1}+O(Qe^{t^{01}})\\
\vdots&\vdots&\vdots&&\vdots\\
a^2\suml_{i=1}^{a-1}A^a_{i,a-i,1}&O(Qe^{t^{01}})&a^3\suml_{k=1}^{a-1}\suml_{i=1}^{a-1}A^a_{i,a-i,a-k}A^a_{k,1,1}+O(Qe^{t^{01}})&\cdots&a^3\suml_{k=1}^{a-1}\suml_{i=1}^{a-1}A^a_{i,a-i,a-k}A^a_{k,a-1,1}+O(Qe^{t^{01}})
\end{array}\right].
\nan
Observe that the second column of $M$ is $O(Qe^{t^{01}})$. Recall that $\det$ is a multilinear function on column vectors. So we can take out the common factor $Qe^{t^{01}}$ in the second column to obtain
\ban
\det(e_q\star_\mathbf t)=\det M=Qe^{t^{01}}\det M_1+o(Qe^{t^{01}}),
\nan
where
\ban
M_1=\left[\begin{array}{ccccc}
0&2B^a_1&0&\cdots&0\\
a+1&*&a\suml_{i=1}^{a-1}A^a_{i,a-i,1}&\cdots&a\suml_{i=1}^{a-1}A^a_{i,a-i,a-1}\\
a^2\suml_{i=1}^{a-1}A^a_{i,a-i,a-1}&*&a^3\suml_{k=1}^{a-1}\suml_{i=1}^{a-1}A^a_{i,a-i,a-k}A^a_{k,1,a-1}&\cdots&a^3\suml_{k=1}^{a-1}\suml_{i=1}^{a-1}A^a_{i,a-i,a-k}A^a_{k,a-1,a-1}\\
\vdots&\vdots&\vdots&&\vdots\\
a^2\suml_{i=1}^{a-1}A^a_{i,a-i,1}&*&a^3\suml_{k=1}^{a-1}\suml_{i=1}^{a-1}A^a_{i,a-i,a-k}A^a_{k,1,1}&\cdots&a^3\suml_{k=1}^{a-1}\suml_{i=1}^{a-1}A^a_{i,a-i,a-k}A^a_{k,a-1,1}
\end{array}\right].
\nan
So 
\ba\label{det}
\det(e_q\star_\mathbf t)=-2B_1^aQe^{t^{01}}\det M_2+o(Qe^{t^{01}}),
\na
where
\ban
M_2=\left[\begin{array}{cccc}
a+1&a\suml_{i=1}^{a-1}A^a_{i,a-i,1}&\cdots&a\suml_{i=1}^{a-1}A^a_{i,a-i,a-1}\\
a^2\suml_{i=1}^{a-1}A^a_{i,a-i,a-1}&a^3\suml_{k=1}^{a-1}\suml_{i=1}^{a-1}A^a_{i,a-i,a-k}A^a_{k,1,a-1}&\cdots&a^3\suml_{k=1}^{a-1}\suml_{i=1}^{a-1}A^a_{i,a-i,a-k}A^a_{k,a-1,a-1}\\
\vdots&\vdots&&\vdots\\
a^2\suml_{i=1}^{a-1}A^a_{i,a-i,1}&a^3\suml_{k=1}^{a-1}\suml_{i=1}^{a-1}A^a_{i,a-i,a-k}A^a_{k,1,1}&\cdots&a^3\suml_{k=1}^{a-1}\suml_{i=1}^{a-1}A^a_{i,a-i,a-k}A^a_{k,a-1,1}
\end{array}\right].
\nan

\begin{lemma}\label{Aiik}
\ban
\suml_{i=1}^{a-1}A^a_{i,a-i,a-k}=\left\{\begin{array}{cc}
-\frac{a-1}{a^2}t^1+O(t^{>1}),&k=1,\\\\
O(t^{>1}),&2\leq k\leq a-1.
\end{array}\right.
\nan
\end{lemma}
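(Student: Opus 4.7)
The plan is to combine weighted homogeneity of $A^a$ with the WDVV identity \eqref{AB}; beyond these two ingredients, the argument is just monomial counting.

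First I would unpack the weighted degrees. Since $A^a$ is weighted homogeneous of degree $2$ and $\deg t^j = (a-j)/a$, a direct computation gives
\[
\deg A^a_{i,a-i,a-k} \;=\; 2 - \tfrac{a-i}{a} - \tfrac{i}{a} - \tfrac{k}{a} \;=\; \tfrac{a-k}{a},
\]
so $\sum_{i=1}^{a-1}A^a_{i,a-i,a-k}$ is weighted homogeneous of degree $(a-k)/a$. Its restriction to $t^{>1}=0$ is therefore a weighted homogeneous polynomial in $t^1$ alone of that same degree. Since $\deg t^1 = (a-1)/a$, the only candidate monomial is $(t^1)^n$ with $n = (a-k)/(a-1)$, and this is a nonnegative integer in the range $k\in\{1,\dots,a-1\}$ exactly when $k=1$ (giving $n=1$). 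For $2\le k\le a-1$ the restriction therefore vanishes, which is the assertion $\sum_i A^a_{i,a-i,a-k} = O(t^{>1})$.

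For $k=1$, the same degree argument forces $\sum_{i=1}^{a-1}A^a_{i,a-i,a-1}\big|_{t^{>1}=0} = c\, t^1$ for some constant $c\in\bQ$, which I would pin down using \eqref{AB}. Summing that identity over $i$ from $1$ to $a-1$ gives
\[
\sum_{k=1}^{a-1}\Bigl(\sum_{i=1}^{a-1}A^a_{i,a-i,a-k}\Bigr)(B^a_1)_k \;=\; -\tfrac{(a-1)B^a_1}{a^2}.
\]
Evaluating at $t^{>1}=0$ and using Lemma \ref{B} together with the same monomial count applied to $B^a_1$ (which has weighted degree $(a-1)/a$, while $(B^a_1)_k$ has degree $(k-1)/a$), one obtains $B^a_1|_{t^{>1}=0} = t^1$, $(B^a_1)_1|_{t^{>1}=0} = 1$, and $(B^a_1)_k|_{t^{>1}=0}=0$ for $2\le k \le a-1$. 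Only the $k=1$ term contributes to the left-hand side, and matching coefficients of $t^1$ yields $c = -(a-1)/a^2$.

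There is no real obstacle here: the proof is essentially degree counting plus one application of WDVV. The most delicate bookkeeping is verifying the vanishing of $(B^a_1)_k|_{t^{>1}=0}$ for $k\ge 2$, but this is the identical monomial-counting argument applied to $B^a_1$ rather than to $\sum_i A^a_{i,a-i,a-k}$, using the weighted-homogeneity data recorded in \eqref{degt}--\eqref{degf}.
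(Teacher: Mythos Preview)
Your proof is correct and follows essentially the same approach as the paper: weighted-degree counting to kill the $k\ge 2$ terms, then summing the WDVV identity \eqref{AB} over $i$ and using Lemma~\ref{B} to read off the constant $-(a-1)/a^2$. The only cosmetic difference is that, after summing \eqref{AB}, the paper discards the $k\ge 2$ contributions by invoking the vanishing of $\sum_i A^a_{i,a-i,a-k}$ modulo $t^{>1}$ (already established), whereas you discard them by running the same monomial count on $(B^a_1)_k$; either route works and the remaining algebra is identical.
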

\begin{proof}
From \eqref{degt} and \eqref{degf}, we have 
\ban
\deg A_{i,a-i,a-k}=2-\frac{a-i}{a}-\frac{i}{a}-\frac{k}{a}=\frac{a-k}{a}.
\nan
So for $k=2,\dots,a-1$, the polynomial $A_{i,a-i,a-k}$ lives in the ideal generated by $t^2,\cdots,t^{a-1}$. Moreover, from Lemma \ref{B} and formula \eqref{AB}, we have
\ban
\suml_{i=1}^{a-1}\suml_{k=1}^{a-1}A^a_{i,a-i,a-k}(B^a_1)_k=-\frac{a-1}{a^2}B^a_1=-\frac{a-1}{a^2}t^1+O(t^{>1}).
\nan
Since $A_{i,a-i,a-k}\in O(t^{>1})$ for $k=2,\cdots,a-1$, it follows that
\ban
\suml_{i=1}^{a-1}A^a_{i,a-i,a-1}(B^a_1)_1=-\frac{a-1}{a^2}t^1+O(t^{>1}).
\nan
Now the required equality comes from $(B^a_1)_1=1+O(t^{>1})$, which is a result of Lemma \ref{B}.
\end{proof}

\begin{lemma}\label{A1jl}
For $j,l=1,\cdots,a-1$, we have
\ban
A^a_{1,j,a-l}=\left\{\begin{array}{cc}
\frac 1a+O(t^{>1}),&l=j+1,\\\\
-\frac{t^1}{a^2}+O(t^{>1}),&(j,l)=(a-1,1),\\\\
O(t^{>1}),&else.
\end{array}\right.
\nan
\end{lemma}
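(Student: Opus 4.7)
The plan is to use a degree-based analysis, analogous to Lemma \ref{Aiik}. From \eqref{degt} and \eqref{degf} one computes $\deg A^a_{1, j, a-l} = (j+1-l)/a$, and I would split into cases on the sign and integrality of this quantity. When $l > j+1$ the degree is negative, so $A^a_{1, j, a-l} \equiv 0$; when $l = j+1$ (forcing $j \le a-2$) the polynomial is a degree-zero constant, which I would identify via \eqref{CR1} and the orbifold Poincar\'e pairing as the three-point invariant $\langle \phi_1 \cup_{CR} \phi_j, \phi_{a-l}\rangle_{orb}^{\bP^1_a} = 1/a$.

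For $l \le j$, restricting to $t^{>1}=0$ reduces $A^a_{1,j,a-l}$ to a polynomial in $t^1$ alone. Each monomial $(t^1)^n$ has degree $n(a-1)/a$, so matching $(j+1-l)/a$ forces $n = (j+1-l)/(a-1)$ to be a positive integer; since $1 \le j+1-l \le a-1$, the only solution is $n=1$, realized precisely when $(j, l) = (a-1, 1)$. For every other such $(j, l)$ the restriction to $t^{>1}=0$ vanishes, giving $A^a_{1, j, a-l} \in O(t^{>1})$ as claimed.

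The case $(j, l) = (a-1, 1)$ is the one genuinely substantive step: I need to pin down the coefficient $c$ in $A^a_{1, a-1, a-1}|_{t^{>1}=0} = c \cdot t^1$. The plan is to take the WDVV-derived identity \eqref{AB} from the proof of Lemma \ref{Aiik}, specialize to $i = 1$, differentiate in $t^1$, and evaluate at $\mathbf t^\mathbf a = 0$. Using $(B^a_1)_k(0) = \delta_{k,1}$ from Lemma \ref{B}, together with the vanishing $A^a_{1, a-1, a-k}(0) = \langle \phi_1, \phi_{a-1}, \phi_{a-k}\rangle_{0,3,0}^{\bP^1_a} = 0$ for all $k \in \{1,\ldots,a-1\}$ (the age constraint $1+(a-1)+(a-k) = a$ fails since $k = a$ is out of range), only the $k=1$ term survives on the left. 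This yields $A^a_{1,1,a-1,a-1}(0) = -(B^a_1)_1(0)/a^2 = -1/a^2 = c$, which finishes the argument. The real work is concentrated in this final step; everything else is bookkeeping driven by degree counting.
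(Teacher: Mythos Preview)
Your argument is correct and, for the degree bookkeeping and the $l=j+1$ case, matches the paper's proof essentially verbatim. The one substantive difference is the treatment of $(j,l)=(a-1,1)$: the paper simply cites Corollaries~3.34 and~3.35 of \cite{IST15} for the value $-t^1/a^2$, whereas you extract it internally by specializing the WDVV identity \eqref{AB} to $i=1$, differentiating in $t^1$, and evaluating at the origin. Your computation checks out: $(B^a_1)_k(0)=\delta_{k,1}$ from Lemma~\ref{B}, and each $A^a_{1,a-1,a-k}$ has strictly positive degree $(a-k)/a$ for $1\leq k\leq a-1$ and hence vanishes at the origin, so only $A^a_{1,1,a-1,a-1}(0)$ survives on the left and equals $-(B^a_1)_1(0)/a^2=-1/a^2$. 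This self-contained derivation is a modest improvement over the paper's proof, since it avoids importing the structural results from \cite{IST15} and uses only tools already established in Section~3.2.
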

\begin{proof}
From \eqref{degt} and \eqref{degf}, we have
\ban
\deg A^a_{1,j,a-l}=2-\frac{a-1}a-\frac{a-j}a-\frac la=\frac{j+1-l}a.
\nan
So
\ban
A^a_{1,j,a-l}\neq O(t^{>1})\Rightarrow(a-1)|(j+1-l)\Rightarrow j+1-l=0\textrm{ or }a-1.
\nan
If $j+1-l=0$, then
\ban
A^a_{1,j,a-j-1}|_{t^{>1}=0}=\<\phi_1,\phi_j,\phi_{a-j-1}\>_{0,3,0}^{\bP^1_a}=\frac 1a.
\nan 
If $j+1-l=a-1$, then $(j,a-l)=(a-1,a-1)$, and the required result follows from Corollary 3.34 and 3.35 in \cite{IST15}.
\end{proof}

From Lemma \ref{Aiik} and \ref{A1jl}, we have
\ba\label{M2}
M_2=\left[\begin{array}{ccccc}
a+1&0&\cdots&0&-\frac{a-1}at^1\\
-(a-1)t^1&0&\cdots&0&\frac{a-1}a(t^1)^2\\
0&-(a-1)t^1&\cdots&0&0\\
\vdots&\vdots&\ddots&\vdots&\vdots\\
0&0&\cdots&-(a-1)t^1&0
\end{array}\right]+O(t^{>1}).
\na
Now from Lemma \ref{B} and formula \eqref{det}, \eqref{M2}, we have
\ban
\det(e_q\star_\mathbf t)=Qe^{t^{01}}\big[-4\frac{(a-1)^{a-1}}a(t^1)^{a+1}+O(t^{>1})\big]+o(Qe^{t^{01}})\neq0.
\nan
This shows the semisimplicity of $QH(\bP^1_a)$.

\subsection{General case}

In this subsection, we prove the semisimplicity of $QH(\bP^1_\mathbf a)$, where $\mathbf a=(a_1,\cdots,a_r)$ is a tuple of postive integers with $r\geq1$ and each $a_\alpha\geq2$. Some results in the last two subsections will be used.

From \eqref{potential} and Corollary \ref{decompositionAB}, the genus-zero potential of $\bP^1_\mathbf a$ has the form:
\ban
F=\frac 12(t^{00})^2t^{01}+\suml_{\alpha=1}^r\suml_{i=1}^{a_\alpha-1}\frac{t^{00}t^{\alpha,i}t^{\alpha,{a_\alpha-i}}}{2a_\alpha}+\suml_{\alpha=1}^rA^{a_\alpha}(t^{\alpha,1},\cdots,t^{\alpha,a_\alpha-1})+B^\mathbf a_1Qe^{t^{01}}+o(Qe^{t^{01}}),
\nan
with
\ban
B^\mathbf a_1(\mathbf t^\mathbf a)=\prodl_{\alpha=1}^rB^{a_\alpha}_1(t^{\alpha,1},\cdots,t^{\alpha,a_\alpha-1}).
\nan
In particular, from Lemma \ref{B}, we have
\ba
B^\mathbf a_1(\mathbf t^\mathbf a)=\prodl_{\alpha=1}^rt^{\alpha1}+O(t^{1,>1},\cdots,t^{r,>1}).
\na
So the big quantum product is given by
\ban
\phi_{\alpha i}\star_\mathbf t\phi_{\alpha j}&=&\delta_{j,a_\alpha-i}\frac{\phi_{01}}{a_\alpha}+\suml_{k=1}^{a_\alpha-1}A^{a_\alpha}_{i,j,a_\alpha-k}\cdot a_\alpha\phi_{\alpha k}+
Qe^{t^{01}}B^\mathbf a_1\bigg(\frac{(B^{a_\alpha}_1)_{i,j}}{B^{a_\alpha}_1}\\
&&\quad+\suml_{k=1}^{a_\alpha-1}\frac{(B^{a_\alpha}_1)_{i,j,a_\alpha-k}}{B^{a_\alpha}_1}a_\alpha\phi_{\alpha k}+\frac{(B^{a_\alpha}_1)_{i,j}}{B^{a_\alpha}_1}\suml_{\beta\neq\alpha}\suml_{k=1}^{a_\beta-1}\frac{(B^{a_\beta}_1)_{a_\beta-k}}{B^{a_\beta}_1}a_\beta\phi_{\beta k}\bigg)+o(Qe^{t^{01}}),\\
\phi_{\alpha i}\star_\mathbf t\phi_{\beta j}&=&Qe^{t^{01}}B^\mathbf a_1\bigg(\frac{(B^{a_\alpha}_1)_i(B^{a_\beta}_1)_j}{B^{a_\alpha}_1B^{a_\beta}_1}+\suml_{k=1}^{a_\alpha-1}\frac{(B^{a_\alpha}_1)_{i,a_\alpha-k}(B^{a_\beta}_1)_j}{B^{a_\alpha}_1B^{a_\beta}_1}a_\alpha\phi_{\alpha k}+\suml_{k=1}^{a_\beta-1}\frac{(B^{a_\alpha}_1)_i(B^{a_\beta}_1)_{j,a_\beta-k}}{B^{a_\alpha}_1B^{a_\beta}_1}a_\beta\phi_{\beta k}\\
&&\quad+\frac{(B^{a_\alpha}_1)_i(B^{a_\beta}_1)_j}{B^{a_\alpha}_1B^{a_\beta}_1}\suml_{\gamma\neq\alpha,\beta}\suml_{k=1}^{a_\gamma-1}\frac{(B^{a_\gamma}_1)_{a_\gamma-k}}{B^{a_\gamma}_1}a_\gamma\phi_{\gamma k}\bigg)+o(Qe^{t^{01}}),\quad\alpha\neq\beta,\\
\phi_{\alpha i}\star_\mathbf t\phi_{01}&=&Qe^{t^{01}}B^\mathbf a_1\bigg(\frac{(B^{a_\alpha}_1)_{i}}{B^{a_\alpha}_1}+\suml_{k=1}^{a_\alpha-1}\frac{(B^{a_\alpha}_1)_{i,a_\alpha-k}}{B^{a_\alpha}_1}a_\alpha\phi_{\alpha k}+\frac{(B^{a_\alpha}_1)_{i}}{B^{a_\alpha}_1}\suml_{\beta\neq\alpha}\suml_{k=1}^{a_\beta-1}\frac{(B^{a_\beta}_1)_{a_\beta-k}}{B^{a_\beta}_1}a_\beta\phi_{\beta k}\bigg)+o(Qe^{t^{01}}),\\
\phi_{01}\star_\mathbf t\phi_{01}&=&Qe^{t^{01}}B^\mathbf a_1\bigg(1+\suml_{\alpha=1}^r\suml_{k=1}^{a_\alpha-1}\frac{(B^{a_\alpha}_1)_{a_\alpha-k}}{B^{a_\alpha}_1}a_\alpha\phi_{\alpha k}\bigg)+o(Qe^{t^{01}}).
\nan

From \eqref{qeuler}, we can use Lemma \ref{Bzero} to check that the quantum Euler class has the form
\ban
e_q=\big(2+\suml_{\alpha=1}^r(a_\alpha-1)\big)\phi_{01}+\suml_{\alpha=1}^ra_\alpha^2\suml_{k=1}^{a_\alpha-1}\suml_{i=1}^{a_\alpha-1}A^{a_\alpha}_{i,a_\alpha-i,a_\alpha-k}\phi_{\alpha k}+o(Qe^{t^{01}}).
\nan
We observe that $e_q\star_\mathbf t\phi_{01}=O(Qe^{t^{01}})$. Moreover, consider the coefficients of $e_q\star_\mathbf t\phi_{01}$ with respect to $\mathscr B$, we have the following observation.
\begin{lemma}
The coefficient of $e_q\star_\mathbf t\phi_{01}$ at $\phi_{00}$ is $2B^\mathbf a_1Qe^{t^{01}}+o(Qe^{t^{01}})$.
\end{lemma}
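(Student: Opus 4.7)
The plan is to exploit the Frobenius property of the quantum product together with the multiplicative decomposition of Proposition \ref{decompositionAB}. Since $\phi^{00}=\phi_{01}$, the sought-after coefficient is simply the scalar $\<e_q\star_\mathbf t\phi_{01},\phi_{01}\>_{orb}^{\bP^1_\mathbf a}$, and by the Frobenius property this rewrites as $\<e_q,\phi_{01}\star_\mathbf t\phi_{01}\>_{orb}^{\bP^1_\mathbf a}$. I would then plug in the explicit formulas for $e_q$ and for $\phi_{01}\star_\mathbf t\phi_{01}$ already available in this subsection.

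After pairing off and using the dual-basis relations $\<\phi_{\alpha k'},\phi_{\alpha k}\>_{orb}=\delta_{k',a_\alpha-k}/a_\alpha$, the inner product collapses to
\ban
Qe^{t^{01}}\bigg[\big(2+\suml_{\alpha=1}^r(a_\alpha-1)\big)B^\mathbf a_1+\suml_{\alpha=1}^ra_\alpha^2\suml_{i,k=1}^{a_\alpha-1}A^{a_\alpha}_{i,a_\alpha-i,k}(B^\mathbf a_1)_{\alpha,a_\alpha-k}\bigg]+o(Qe^{t^{01}}).
\nan
By Proposition \ref{decompositionAB} the derivative $(B^\mathbf a_1)_{\alpha,a_\alpha-k}$ factors as $(B^{a_\alpha}_1)_{a_\alpha-k}\cdot B^\mathbf a_1/B^{a_\alpha}_1$, which pulls a single-sector factor out of the inner sum over $k$ and reduces matters to evaluating $\suml_k A^{a_\alpha}_{i,a_\alpha-i,k}(B^{a_\alpha}_1)_{a_\alpha-k}$.

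This remaining sum is precisely what the tear-drop WDVV identity \eqref{AB} computes (after the relabeling $k\mapsto a_\alpha-k$), yielding $-B^{a_\alpha}_1/a_\alpha^2$. The identity applies verbatim because, by Proposition \ref{decompositionAB}, the polynomials $A^{a_\alpha}$ and $B^{a_\alpha}_1$ in the decomposition of the $\bP^1_\mathbf a$ potential are literally those entering the $\bP^1_{a_\alpha}$ potential; equivalently, one may run WDVV of type $((\alpha,i),(\alpha,a_\alpha-i);(01),(01))$ directly in $\bP^1_\mathbf a$ and observe that all cross-sector contributions vanish thanks to the decomposition. Substituting back, then summing over $i$ and $\alpha$, the second bracketed term contributes exactly $-\suml_\alpha(a_\alpha-1)B^\mathbf a_1$, which cancels the matching piece of the first bracketed term and leaves $2B^\mathbf a_1$, as claimed. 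In my view the only delicate point is the bookkeeping that separates intra-sector from cross-sector contributions, and Proposition \ref{decompositionAB} is precisely the tool that makes this bookkeeping transparent.
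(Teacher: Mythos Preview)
Your proof is correct and follows essentially the same route as the paper: both compute the pairing $\<e_q\star_\mathbf t\phi_{01},\phi_{01}\>_{orb}^{\bP^1_\mathbf a}$, arrive at the same expression, and invoke the tear-drop WDVV identity \eqref{AB} to collapse the $A$-$B$ sum. Your use of the Frobenius property and the explicit factoring via Proposition \ref{decompositionAB} are minor presentational variations---the paper has already absorbed the latter into its displayed formula for $\phi_{01}\star_\mathbf t\phi_{01}$---so the arguments are substantively identical.
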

\begin{proof}
One can check that
\ban
\<e_q\star_\mathbf t\phi_{01},\phi_{01}\>_{orb}^{\bP^1_\mathbf a}&=&B^\mathbf a_1Qe^{t^{01}}\bigg(2+\suml_{\alpha=1}^r(a_\alpha-1)+\suml_{\alpha=1}^ra_\alpha^2\suml_{k=1}^{a_\alpha-1}\suml_{i=1}^{a_\alpha-1}A^{a_\alpha}_{i,a_\alpha-i,a_\alpha-k}\frac{(B^{a_\alpha}_1)_k}{B^{a_\alpha}_1}\bigg)+o(Qe^{t^{01}}).
\nan
Now we can use \eqref{AB} to conclude the required result.
\end{proof}

Direct calculation gives
\ban
e_q\star_\mathbf t\phi_{\alpha j}=a_\alpha\suml_{i=1}^{a_\alpha-1}A^{a_\alpha}_{i,a_\alpha-i,j}\phi_{01}+a_\alpha^3\suml_{l=1}^{a_\alpha-1}\suml_{k=1}^{a_\alpha-1}\suml_{i=1}^{a_\alpha-1}A^{a_\alpha}_{i,a_\alpha-i,a_\alpha-k}A^{a_\alpha}_{k,j,a_\alpha-l}\phi_{\alpha l}+O(Qe^{t^{01}}).
\nan

For $\alpha=1,\cdots,r$, let $\vec\phi_{\alpha}:=[\phi_{\alpha 1},\cdots,\phi_{\alpha,a_\alpha-1}]$ be a row vector. Consider the matrix $M$ of $(e_q\star_\mathbf t)$ with respect to the basis $\mathscr B$:
\ban
(e_q\star_\mathbf t)[\phi_{00},\phi_{01},\vec\phi_{1},\cdots,\vec\phi_{r}]=[\phi_{00},\phi_{01},\vec\phi_{1},\cdots,\vec\phi_{r}]M.
\nan
Then $M$ is $\bQ[t^1,\cdots,t^{a-1}][[Qe^{t^{01}}]]$-valued, and the second column of $M$ is $O(Qe^{t^{01}})$. Recall that $\det$ is a multilinear function on column vectors. So we can take out the common factor $Qe^{t^{01}}$ in the second column of $M$ to obtain
\ban
\det(e_q\star_\mathbf t)=\det M=Qe^{t^{01}}\det M_1+o(Qe^{t^{01}}),
\nan
with
\ban
M_1=\left[\begin{array}{ccccc}
0&2B^\mathbf a_1&0&\cdots&0\\
2+\suml_{\alpha=1}^r(a_\alpha-1)&*&\vec r_1&\cdots&\vec r_r\\
\vec c_1&*&b_1&\cdots&0\\
\vdots&\vdots&\vdots&\ddots&\vdots\\
\vec c_r&*&0&\cdots&b_r
\end{array}\right].
\nan
Here for $\alpha=1,\cdots,r$,
\ban
\vec r_\alpha&=&[a_\alpha\suml_{i=1}^{a_\alpha-1}A^{a_\alpha}_{i,a_\alpha-i,1},\cdots,a_\alpha\suml_{i=1}^{a_\alpha-1}A^{a_\alpha}_{i,a_\alpha-i,a_\alpha-1}],\\
\vec c_\alpha&=&[a_\alpha\suml_{i=1}^{a_\alpha-1}A^{a_\alpha}_{i,a_\alpha-i,a_\alpha-1},\cdots,a_\alpha\suml_{i=1}^{a_\alpha-1}A^{a_\alpha}_{i,a_\alpha-i,1}]^T,\\
b_\alpha&=&\left[\begin{array}{ccc}
a_\alpha^3\suml_{l=1}^{a_\alpha-1}\suml_{k=1}^{a_\alpha-1}\suml_{i=1}^{a_\alpha-1}A^{a_\alpha}_{i,a_\alpha-i,a_\alpha-k}A^{a_\alpha}_{k,1,a_\alpha-1}&\cdots&a_\alpha^3\suml_{l=1}^{a_\alpha-1}\suml_{k=1}^{a_\alpha-1}\suml_{i=1}^{a_\alpha-1}A^{a_\alpha}_{i,a_\alpha-i,a_\alpha-k}A^{a_\alpha}_{k,a_\alpha-1,a_\alpha-1}\\
\vdots&&\vdots\\
a_\alpha^3\suml_{l=1}^{a_\alpha-1}\suml_{k=1}^{a_\alpha-1}\suml_{i=1}^{a_\alpha-1}A^{a_\alpha}_{i,a_\alpha-i,a_\alpha-k}A^{a_\alpha}_{k,1,1}&\cdots&a_\alpha^3\suml_{l=1}^{a_\alpha-1}\suml_{k=1}^{a_\alpha-1}\suml_{i=1}^{a_\alpha-1}A^{a_\alpha}_{i,a_\alpha-i,a_\alpha-k}A^{a_\alpha}_{k,a_\alpha-1,1}
\end{array}\right].
\nan
So we have
\ban
\det(e_q\star_\mathbf t)=-2B^\mathbf a_1Qe^{t^{01}}\det M_2+o(Qe^{t^{01}}),
\nan
with
\ban
M_2=\left[\begin{array}{cccc}
2+\suml_{\alpha=1}^r(a_\alpha-1)&\vec r_1&\cdots&\vec r_r\\
\vec c_1&b_1&\cdots&0\\
\vdots&\vdots&\ddots&\vdots\\
\vec c_r&0&\cdots&b_r
\end{array}\right].
\nan
From Lemma \ref{Aiik} and \ref{A1jl}, we have
\ban
\vec r_\alpha&=&[0,\cdots,0,-\frac{a_\alpha-1}{a_\alpha}t^{\alpha1}]+O(t^{\alpha,>1}),\\
\vec c_\alpha&=&[-(a_\alpha-1)t^{\alpha1},0,\cdots,0]^T+O(t^{\alpha,>1}),\\
b_\alpha&=&\left[\begin{array}{cccc}
0&\cdots&0&\frac{a_\alpha-1}{a_\alpha}(t^{\alpha1})^2\\
-(a_\alpha-1)t^{\alpha1}&\cdots&0&0\\
\vdots&\ddots&\vdots&\vdots\\
0&\cdots&-(a_\alpha-1)t^{\alpha1}&0
\end{array}\right]+O(t^{\alpha,>1}).
\nan
As a consequence,
\ban
\det(e_q\star_\mathbf t)=Qe^{t^{01}}\big[-4\prodl_{\alpha=1}^r\frac{(a_\alpha-1)^{a_\alpha-1}}{a_\alpha}(t^{\alpha1})^{a_\alpha+1}+O(t^{1,>1},\cdots,t^{r,>1})\big]+o(Qe^{t^{01}})\neq0.
\nan
This shows the semisimplicity of $QH(\bP^1_\mathbf a)$.

\subsection{Proof of Corollary \ref{cor}}

In this subsection, we prove Corollary \ref{cor}.

If $\sC$ is a $\bP^1$-orbifold, then the semisimplicity of $QH(\sC)$ comes from Proposition \ref{big}, and the existence of a full exceptional collection in $D^b(\sC)$ was proved by Geigle-Lenzing (Proposition 4.1 in \cite{GL}). 

Now assume that $\sC$ is not a $\bP^1$-orbifold. On one hand, since the genus of $|\sC|$ is positive, it follows that the quantum product of $\sC$ is identical to the Chen-Ruan product, which implies that $QH(\sC)$ contains nilpotent elements, and hence not semisimple. On the other hand, since the odd cohomology group of $|\sC|$ does not vanish, it follows that we can use the orbifold HKR isomorphism (see Section 1.15-1.17 in \cite{ACH}) to conclude that $D^b(\sC)$ does not admit a full exceptional collection.

This finishes the proof of Corollary \ref{cor}.

\section{Small quantum cohomology}

In this section, we prove Proposition \ref{small}. To show the ``only if" part, from Corollary \ref{nilpotent}, it suffices to prove the following lemma.
\begin{lemma}
If $\chi_{orb}(\sC)\leq 0$, then $qH(\sC)$ contains nilpotent elements.
\end{lemma}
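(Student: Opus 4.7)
The plan is to exhibit an explicit nilpotent element, namely the point class $\phi_{01}$, and show that $\phi_{01}\circ\phi_{01}=0$ in $qH(\sC)$ whenever $\chi_{orb}(\sC)\leq 0$. Write $\sC=C_{\mathbf a}$ and expand
\[
\phi_{01}\circ\phi_{01}=\phi_{01}\cup_{CR}\phi_{01}+\suml_{s\in\mathscr S}\suml_{d=1}^\infty\<\phi_{01},\phi_{01},\phi_s\>_{0,3,d}^{\sC}Q^d\phi^s.
\]
I will handle the classical term and each quantum term separately.

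For the classical term, $\phi_{01}$ lies in the untwisted sector and equals the point class in $H^2(\sC)$; since $\dim_\bC\sC=1$, we have $\phi_{01}\cup_{CR}\phi_{01}\in H^4(\sC)=0$. For the quantum terms, I will invoke the dimension axiom. The virtual complex dimension of $\overline{\mathcal M}_{0,3}(\sC,d)$ is $\dim_\bC\sC-3+3+d\int c_1(T\sC)=1+d\chi_{orb}(\sC)$, so a non-vanishing invariant $\<\phi_{s_1},\phi_{s_2},\phi_{s_3}\>_{0,3,d}^{\sC}$ requires
\[
\deg_{orb}\phi_{s_1}+\deg_{orb}\phi_{s_2}+\deg_{orb}\phi_{s_3}=2+2d\chi_{orb}(\sC).
\]
(This is equivalent to the homogeneity statement \eqref{degf} applied to the degree-$d$ piece of $F$ evaluated at $\mathbf t=0$.) Taking $\phi_{s_1}=\phi_{s_2}=\phi_{01}$, which has orbifold degree $2$, the required degree of $\phi_{s_3}$ is $2d\chi_{orb}(\sC)-2$. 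Under the hypothesis $\chi_{orb}(\sC)\leq 0$ and $d\geq 1$, this is $\leq -2<0$, contradicting the fact that every class in $\mathscr B$ has orbifold degree in $[0,2]$. Hence every quantum correction in the expansion above vanishes.

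Combining the two steps gives $\phi_{01}\circ\phi_{01}=0$, so $\phi_{01}$ is a nonzero nilpotent element of $qH(\sC)$, as required. There is no real obstacle beyond a careful bookkeeping of the orbifold-degree convention fixed in Section 2.1; once one sees that the two copies of $\phi_{01}$ already saturate the available orbifold degree, the negativity of $\chi_{orb}(\sC)$ forces the third insertion to have impossible degree, and the argument is complete.
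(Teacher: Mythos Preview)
Your proof is correct and uses essentially the same idea as the paper: the dimension axiom forces the orbifold degree of any nonzero three-point invariant to satisfy $\deg_{orb}\phi_{s_1}+\deg_{orb}\phi_{s_2}+\deg_{orb}\phi_{s_3}=2+2d\chi_{orb}(\sC)\leq 2$, which is incompatible with the insertions you chose. The only difference is one of packaging: the paper observes more generally that $x\circ y$ is supported on $\{\phi^s:\deg_{orb}\phi^s\geq\deg_{orb}x+\deg_{orb}y\}$ and hence \emph{every} homogeneous class of positive orbifold degree is nilpotent, whereas you extract the single explicit witness $\phi_{01}$; both arguments rest on the same degree inequality.
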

\begin{proof}
For homogenous $x,y\in H^{even}_{orb}(\sC)$, we have
\ban
x\circ y=x\cup_{CR}y+\suml_{s\in\mathscr S}\suml_{d>0}\<x,y,\phi_s\>_{0,3,d}^{\sC}Q^d\phi^s.
\nan
Note that $\deg_{orb}(x\cup_{CR}y)=\deg_{orb}x+\deg_{orb}y$. If for some $d>0$ and $s\in\mathscr S$ we have
\ban
\<x,y,\phi_s\>_{0,3,d}^{\sC}\neq0,
\nan
then the dimension constraint gives
\ban
&&\deg_{orb}x+\deg_{orb}y+\deg_{orb}\phi_s=2+2d\cdot\chi_{orb}(\sC)\leq2\\
&\Rightarrow&\deg_{orb}\phi^s\geq\deg_{orb}x+\deg_{orb}y.
\nan
As a consequence, $x\circ y$ is a linear combination of $\{\phi^s:s\in\mathscr S,\textrm{ and }\deg_{orb}\phi^s\geq\deg_{orb}x+\deg_{orb}y\}$ with coefficients in $\bQ[[Q]]$. In particular, if $\deg_{orb}x>0$, then $x$ is nilpotent.
\end{proof}

To prove the ``if" part of Proposition \ref{small}, we first use the dimension axiom to observe that for a Fano orbi-curve $\sC$, the coefficients in the small quantum product take values in $\bC[Q]$. So it suffices to show the generic semisimplicity of 
\ban
\bar qH(\sC):=H^{even}_{orb}(C_\mathbf a)\otimes_\bC\bC[Q]
\nan
over $\bC[Q]$, since we have the follwing Cartesian diagram
\[
\begin{CD}
\tspec\big(qH(\sC)\big)@>>>\tspec\big(\bar qH(\sC)\big)\\
@VVV @VVV \\
\tspec\bC[[Q]]@>>> \tspec\bC[Q],
\end{CD}
\]
where the base morphism is dominant, given by the natural injective map $\bC[Q]\rightarrow\bC[[Q]]$.

Recall that, from \eqref{CR1} and \eqref{CR2}, the orbifold cohomogy ring $H^{even}_{orb}(C_\mathbf a)$ is generated by $\phi_{\alpha1}$'s over $\bC$, with relations
\ban
f_{\alpha\beta}=\phi_{\alpha1}\cup_{CR}\phi_{\beta1}=0,\quad g_{\alpha\beta}=a_\alpha\underbrace{\phi_{\alpha1}\cup_{CR}\cdots\cup_{CR}\phi_{\alpha1}}_{a_\alpha}-a_\beta\underbrace{\phi_{\beta1}\cup_{CR}\cdots\cup_{CR}\phi_{\beta1}}_{a_\beta}=0,\quad\alpha\neq\beta.
\nan
So $\bar qH(\sC)$ is generated by $\phi_{\alpha1}$'s over $\bC[Q]$, with new relations $f'_{\alpha\beta},g'_{\alpha\beta}(\alpha\neq\beta)$. Here the new relation $f'_{\alpha\beta}$ (resp. $g'_{\alpha\beta}$) is just the relation $f_{\alpha\beta}$ (resp. $g_{\alpha\beta}$) evaluated in the small quantum cohomology ring structure. In other words, we have the following presentation for $\bar qH(\sC)$:
\ban
\bar qH(\sC)\cong\bC[Q][x_1,\cdots,x_{r}]/I,\textrm{ with }\phi_{\alpha1}\mapsto x_\alpha,
\nan
where the ideal $I$ is generated by the relations $f'_{\alpha\beta},g'_{\alpha\beta}(\alpha\neq\beta)$. Note that at $Q=1$, $\bar qH(\sC)_{Q=1}$ is a $\bC$-algebra of dimension $N=2+\suml_{\alpha=1}^r(a_\alpha-1)$. Our strategy is to show that the ideal $I_{Q=1}$ determines exactly $N$ distinct points in $\bC^r$, from which the semisimplicity of $\bar qH(\sC)_{Q=1}$ over $\bC$ follows.  From Lemma \ref{ABsemisimple}, this implies the generic semisimplicity of $\bar qH(\sC)$ over $\bC[Q]$.

Now we check the ``if" part of Proposition \ref{small} case by case.

\underline{$\sC=\bP^1_{a_1,a_2}(a_1,a_2\geq1)$:} We have the following presentation for $\bar qH(\bP^1_{a_1,a_2})$ (see (4.32) in \cite{MT}): 
\ban
\bar qH(\bP^1_{a_1,a_2})\cong\bC[Q][x_1,x_2]/I,\textrm{ with }\phi_{11}\mapsto x_1,\phi_{21}\mapsto x_2,
\nan
where the ideal $I$ is generated by $ x_1x_2-Q$ and $a_1x_1^{a_1}-a_2x_2^{a_2}$. Direct calculation gives the set of solutions of $I_{Q=1}$:
\ban
x_1=(\frac{a_2}{a_1})^{\frac{1}{a_1+a_2}}\xi_{a_1+a_2}^k,\quad x_2=(\frac{a_1}{a_2})^{\frac{1}{a_1+a_2}}\xi_{a_1+a_2}^{-k}\quad(1\leq k\leq a_1+a_2).
\nan
Here for a positive integer $N$, $\xi_N:=e^{\frac{2\pi\sqrt{-1}}{N}}$.
\begin{remark}
The semisimplicity of $qH(\bP^1_{a_1,a_2})$ was also pointed out by Milanov-Tseng (see Section 4.4 in \cite{MT}).
\end{remark}

\underline{$\sC=\bP^1_{2,2,a}(a\geq2)$:}
We have the following presentation for $\bar qH(\bP^1_{2,2,a})$ (see Section 5 in \cite{R}):
\ban
\bar qH(\bP^1_{2,2,a})\cong\bC[Q][x,y,z]/I,\textrm{ with }\phi_{11}\mapsto x,\phi_{21}\mapsto y,\phi_{31}\mapsto z,
\nan
where the ideal $I$ is generated by
\ban
&&xy+a\suml_{k=0}^{\lfloor\frac{a-1}{2}\rfloor}(-1)^{k-1}{a-1-k\choose k}Q^{2k+1}z^{a-1-2k},\\
&\textrm{and }&xz-2Qy,\quad yz-2Qx.
\nan
Using formulae (2.3) and (2.4) in \cite{GQ}, we can solve the equations directly. For $a=2m$, the set of solutions of $I_{Q=1}$ is
\ban
(\pm a,\pm a,2),\quad(\pm a,\mp a,-2),\quad(0,0,0),\quad(0,0,\pm\sqrt{2-2\cos\frac{k\pi}{m}})\quad(1\leq k\leq m-1),
\nan
and for $a=2m+1$, the set of solutions of $I_{Q=1}$ is
\ban
(\pm a,\pm a,2),\quad(\pm a\sqrt{-1},\mp a\sqrt{-1},-2),\quad(0,0,\pm\sqrt{2-2\cos\frac{(2k+1)\pi}{2m+1}})\quad(0\leq k\leq m-1).
\nan

\underline{$\sC=\bP^1_{2,3,3}$:}
Using the explicit formula for the genus-zero potential (see Appendix A.1 in \cite{IST12}), we have the following presentation for $\bar qH(\bP^1_{2,3,3})$:
\ban
\bar qH(\bP^1_{2,3,3})\cong\bC[Q][x,y,z]/I,\textrm{ with }\phi_{11}\mapsto x,\phi_{21}\mapsto y,\phi_{31}\mapsto z,
\nan
where the ideal $I$ is generated by
\ban
xy-3Qz^2+6Q^3y,\quad xz-3Qy^2+6Q^3z,\quad yz-2Qx-4Q^4.
%3y^3-3z^3,\quad 3y^3-2x^2-16Q^3x-24Q^6.
\nan
Direct calculation gives the set of solutions of $I_{Q=1}$:
\ban
(-2,0,0),\quad(0,2\xi_3^k,2\xi_3^{2k}),\quad(6,4\xi_3^k,4\xi_3^{2k})\quad(k=0,1,2).
\nan

\underline{$\sC=\bP^1_{2,3,4}$:}
Using the explicit formula for the genus-zero potential (see Appendix A.5 in \cite{IST12}), we have the following presentation for $\bar qH(\bP^1_{2,3,4})$:
\ban
\bar qH(\bP^1_{2,3,4})\cong\bC[Q][x,y,z]/I,\textrm{ with }\phi_{11}\mapsto x,\phi_{21}\mapsto y,\phi_{31}\mapsto z,
\nan
where the ideal $I$ is generated by
\ban
&&xy-4Qz^3+28Q^4x+72Q^7z,\quad xz-3Qy^2+8Q^3z^2-18Q^5y-24Q^9,\\
&&yz-2Qx-4Q^4z.
\nan
We use MAPLE to get the set of solutions of $I_{Q=1}$:
\ban
(0,-4,0),\quad(0,-2,0),\quad(\pm4,0,\mp2),\quad(0,4,3\pm\sqrt2),\quad(\pm12,8,\pm6).
\nan

\underline{$\sC=\bP^1_{2,3,5}$:}
Using the explicit formula for the genus-zero potential (see Appendix A.9 in \cite{IST12}), we have the following presentation for $\bar qH(\bP^1_{2,3,5})$:
\ban
\bar qH(\bP^1_{2,3,5})\cong\bC[Q][x,y,z]/I,\textrm{ with }\phi_{11}\mapsto x,\phi_{21}\mapsto y,\phi_{31}\mapsto z,
\nan
where $I$ is the ideal generated by
\ban
&&xy=5Qz^4-129Q^5y^2+350Q^7z^3-2920Q^{10}x-8140Q^{13}z^2+14130Q^{15}y+20400Q^{19}z+76080Q^{25},\\
&&xz=3Qy^2-10Q^3z^3+72Q^6x+205Q^9z^2-360Q^{11}y-510Q^{15}z-1920Q^{21},\\
&&yz=2Qx+5Q^4z^2-12Q^6y-20Q^{10}z-60Q^{16}.
\nan
We use MAPLE to get the set of solutions of $I_{Q=1}$:
\ban
(0,0,-2),(6,-4,0),(30,20,12),(0,\pm5,\mp3),(0,10,3\pm3\sqrt5),(-10,0,2\pm2\sqrt5).
\nan

{\bf Acknowledgements}

The author is grateful to Yongbin Ruan and Jian Zhou for encouragement, and Jianxun Hu for constant support. The author would also like to thank Yuri Manin and Maxim Smirnov for their interest in this work, and Chengyong Du, Weiqiang He, Xiaowen Hu and Di Yang for helpful discussions.  This work is supported by grants of the National Natural Science Foundation of China [11601534, 11831017, 11771461, 11521101] and the Fundamental Research Funds for the Central Universities [SYSU18lgpy67].


\begin{thebibliography}{0}

\bibitem{A}Abrams, L. ``The quantum Euler class and the quantum cohomology of the Grassmannians." \emph{ Israel J. Math.} 117 (2000): 335-352.

\bibitem{ACH}Arinkin, D., Caldararu, A. and Hablicsek, M. ``Formality of derived intersections and the orbifold HKR isomorphism." arXiv:1412.5233.

\bibitem{ACV}Abramovich, D., Corti, A. and Vistoli, A. ``Twisted bundles and admissible covers." \emph{Comm. Algebra} 31, no. 8 (2003): 3547-3618.

\bibitem{AGV}Abramovich, D., Graber, T. and Vistoli, A. ``Gromov-Witten theory of Deligne-Mumford stacks." \emph{Amer. J. Math.} 130, no. 5 (2008): 1337-1398.

%\bibitem[AJR]{AJR}A. Adem, J. Leida, Y. Ruan, \emph{Orbifolds and stringy topology}, Cambridge Tracts in  Mathematics, 171. Cambridge University Press, Cambridge, 2007.

%\bibitem[B03]{B03}A. Bayer, \emph{Semisimple Frobenius manifolds and quantum cohomology},

\bibitem{B04}Bayer, A. ``Semisimple quantum cohomology and blowups." \emph{Int. Math. Res. Not.} 40 (2004): 2069-2083.

\bibitem{BM}Bayer, A. and Manin, Y. ``(Semi)simple exercises in quantum cohomoogy." In: \emph{The Fano Conference} Univ. Torino, Turin, (2004): 143-173,.

\bibitem{BN}Behrend, K. and Noohi, B. ``Uniformization of Deligne-Mumford curves." \emph{J. Reine Angew. Math.} 599 (2006): 111-153.

\bibitem{C}Ciolli, G. ``On the quantum cohomology of some Fano threefods and a conjecture of Dubrovin." \emph{Int. J. Math.}16, no. 8 (2005): 823-839.

\bibitem{CMP}Chaput, P.-E., Manivel, L. and Perrin, N. ``Quantum cohomology of minuscule homogeneous spaces III: semi-simplicity and consequences." \emph{Canad. J. Math.} 62 (2010): 1246-1263.

\bibitem{CP}Chaput, P.-E. and Perrin, N. ``On the quantum cohomology of adjoint varieties." \emph{Proc. Lond. Math. Soc.} (3) 103, no. 2(2011): 294-330.

\bibitem{CR02}Chen, W. and Ruan, Y. ``Orbifold Gromov-Witten theory." Orbifolds in mathematics and physics." \emph{Contemp. Math.} 310, Amer. Math. Soc., (2002), 25-85.

\bibitem{CR04}Chen, W. and Ruan, Y. ``A new cohomology theory of orbifold." \emph{Commun. Math. Phys.} 248, no. 1(2004): 1-31.

\bibitem{D}Dubrovin, B. ``Geometry and analytic theory of Frobenius manifolds." In: \emph{Proceedings of the International Congress of Mathematicians} Vol. II (Berlin, 1998). Doc. Math. 1998, Extra Vol. II, 315-326.

%\bibitem[DZ]{DZ}B. Dubrovin, Y. Zhang, \emph{Extended affine Weyl groups and Frobenius manifolds}, Compos. Mah. \textbf{111}(1998), 167-219.

\bibitem{EGH}Eliashberg, Y., Givental, A. and Hofer, H. ``Introduction to symplectic field theory." \emph{Geom. Funct. Anal.} Special volume, Part II, (2000), 560-673.

\bibitem{G}Givental, A. ``Semi-simple Frobenius structures in higher genus." \emph{Int. Math. Res. Not.} 23 (2001): 1265-1286.

\bibitem{GL}Geigle, W. and Lenzing, H. ``A class of weighted projective curves arising in representation theory of finite-dimensional algebras." In: \emph{Singularities, Representation of Algebras, and Vector Bundles} (Lambrecht, 1985), Lecture Notes in Mathematics, Vol. 1273 (Springer, Berlin, 1987), 265-297.

\bibitem{GMS}Galkin, S., Mellit, A. and Smirnov, M. ``Dubrovin's conjecture for $IG(2,6)$." \emph{Int. Math. Res. Not.} 18 (2015): 8847-8859.

\bibitem{GQ}Gould, H. W. and Quaintance, J. ``Combinatorial identities: Table I: Intermediate techniques for summing finite series, from the seven unpublished manuscripts of H. W. Gould, edited and compiled by J. Quaintance." e-book available at https://www.math.wvu.edu/~gould/Vol.4.PDF

\bibitem{HMT}Hertling, C., Manin, Y. and Teleman, C. ``An update on semisimple quantum cohomology and $F$-manifolds." \emph{Proc. Steklov Inst. Math.} 264, no. 1 (2009): 62-69.

\bibitem{I}Iritani, H. ``Convergence of quantum cohomology by quantum Lefschetz." \emph{J. Reine Angew. Math.} 610 (2007): 29-69.

\bibitem{IST12}Ishibashi, Y., Shiraishi, Y. and Takahashi, A. ``Primitive forms for affine cusp polynomials." arXiv:1211.1128.

\bibitem{IST15}Ishibashi, Y., Shiraishi, Y. and Takahashi, A. ``A uniqueness theorem  for Frobenius manifolds and Gromov-Witten theory for orbifold projective lines." \emph{J. Reine Angew. Math.} 702 (2015): 143-171.

%\bibitem[IT]{IT}K. Iwaki, A. Takahashi, \emph{Stokes matrices for the quantum cohohomologies of a class of orbifold projective lines}, J. Math. Phys. \textrm{54}, 101701(2013). 

%\bibitem[Ka]{K}Y. Kawamata, \emph{Derived categories of toric varieties}, Michigan Math. J. \textbf{54}(2006), 517-535.

\bibitem{Ke}Ke, H.-Z. (in preparation).

\bibitem{KS}Krawitz, M. and Shen, Y. ``Landau-Ginzburg/Calabi-Yau correspondence of all genera for elliptic orbifold $\bP^1$." arXiv:1106.6270.

%\bibitem[M]{M}Y. Manin, \emph{Frobenius manifolds, quantum cohomology, and moduli spaces}, AMS Colloquium Publications Series 47. American Mathematical Society, Providence, RI, 1999.

\bibitem{MT}Milanov, T. and Tseng, H.-H. ``The spaces of Laurent polynomials, Gromov-Witten theory of $\bP^1$-orbifolds, and integrable hierarchies." \emph{J. Reine Angew. Math.} 622 (2008): 189-235.


\bibitem{P}Perrin, N. ``Semisimple quantum cohomology of some Fano varieties." arXiv:1405.5914.

\bibitem{R}Rossi, P. ``Gromov-Witten theory of orbicurves, the space of tri-polynomials and symplectic field theory of Seifert fibrations." \emph{Math. Ann.} 348 (2010): 265-287.

%\bibitem[S]{S}Y. Shiraishi, \emph{On Frobenius manifolds from Gromov-Witten theory of orbifold projective lines with $r$ orbifold points}, Tohoku Math. J. \textbf{20}(2018), 17-37.

%\bibitem[SaTa]{SaTa}I. Satake, A. Takahashi, \emph{Gromov-Witten invariants for mirror orbifolds of simple elliptic singularities}, Ann. Inst. Fourier (Grenoble) \textbf{61}(7)(2011), 2885-2907.

\bibitem{ST}Shiraishi, Y. and Takahashi, A. ``On the Frobenius manifolds for cusp singularities." \emph{Adv. Math.} 273 (2015): 485-522.

\bibitem{T}Teleman, C. ``The structure of $2$D semi-simple field theories." \emph{Invent. Math.} 188 (2012): 525-588. 

\end{thebibliography}
\end{document}